\documentclass[11pt]{article}
\title{\textsc{Percolations on random maps I: \\ half-plane models}}

\author{Omer Angel\footnote{University of British Columbia} \and
  Nicolas Curien\footnote{CNRS and University Paris 6}}

\date{}

\usepackage{amsmath,amsfonts,amssymb,amsthm,mathrsfs}
\usepackage[a4paper,vmargin={3.5cm,3.5cm},hmargin={2.5cm,2.5cm}]{geometry}
\usepackage{graphicx}
\usepackage{pstricks}
\usepackage[font=sf, labelfont={sf,bf}, margin=1cm]{caption}
\usepackage[colorlinks=true,linkcolor=black,citecolor=black,urlcolor=blue,pdfborder={0 0 0}]{hyperref}

\usepackage{enumerate}
\usepackage{cleveref}
  \crefname{theorem}{Theorem}{Theorems}
  \crefname{lemma}{Lemma}{Lemmas}
  \crefname{remark}{Remark}{Remarks}
  \crefname{proposition}{Proposition}{Propositions}
  \crefname{definition}{Definition}{Definitions}
  \crefname{corollary}{Corollary}{Corollaries}
  \crefname{section}{Section}{Sections}
  \crefname{figure}{Figure}{Figures}

\newcommand{\one}{{\triangle^{1}}}
\newcommand{\two}{{\triangle^{2}}}
\newcommand{\total}{\one,\two}
\newcommand{\allthree}{\{\one,\two,\square\}}

\newtheorem{theorem}{Theorem}[]
\newtheorem{definition}{Definition}[]
\newtheorem{proposition}[theorem]{Proposition}
\newtheorem{lemma}[theorem]{Lemma}

\theoremstyle{definition}
\newtheorem*{remark}{Remark}

\renewcommand{\P}{\mathbb P}
\newcommand{\E}{\mathbb E}
\newcommand{\Z}{\mathbb Z}
\newcommand\site{\mathrm{site}}
\newcommand\face{\mathrm{face}}
\newcommand\bond{\mathrm{bond}}
\newcommand\bM{\mathbf M}
\newcommand\cM{\mathcal M}
\newcommand\cE{\mathcal E}
\newcommand\cR{\mathcal R}
\newcommand\cL{\mathcal L}
\newcommand\cC{\mathcal C}
\newcommand\cH{\mathcal H}
\newcommand\cS{\mathcal S}


\begin{document}

\maketitle

\begin{abstract}
  We study Bernoulli percolations on random lattices of the half-plane
  obtained as local limit of uniform planar triangulations or
  quadrangulations. Using the characteristic spatial Markov property or
  peeling process \cite{Ang03} of these random lattices we prove a
  surprisingly simple universal formula for the critical threshold for bond
  and face percolations on these graphs.  Our techniques also permit us to
  compute off-critical and critical exponents related to percolation
  clusters such as the volume and the perimeter.
\end{abstract}


\section{Introduction}

In this work we study different types of percolations (bond and site on the
graph and its dual) on several types of infinite random maps.  For the sake
of clarity we focus on three kinds of maps: triangulations, two-connected
triangulations and quadrangulations, though our method is more general and
we shall indicate this at times.  We show that the spatial Markov property
of the underlying random lattice can be used as in \cite{Ang03} in order to
compute the critical threshold for percolation as well as geometric
properties of critical and near critical clusters.  In order to state our
results precisely let us start by introducing rigorously the random
lattices which we are working with.

\paragraph{Random infinite lattices.} 
Recall that a finite planar map (map in short) is a finite connected graph
embedded in the two-dimensional sphere seen up to continuous deformations.
The last decade has seen the emergence and the development of the
mathematical theory of ``random planar maps''.  A primary goal of this
theory is to understand the geometry of large random planar structures.

One fruitful approach consists of defining infinite random maps which are
the so-called \emph{local limits} of random planar maps and studying their
properties.  This idea has been first introduced in the seminal work of
Benjamini \& Schramm \cite{BS01} in the context of planar maps and is also
related to works of Aldous on local limits of trees \cite{AS04}.  Let us
present this setup.  As usual in the context of planar maps, we work with
rooted maps, meaning maps with a distinguished oriented edge $\vec{e}$
called the {\bf root edge} of the map.  The origin vertex of the root edge
is called the origin or root vertex of the map.  Following \cite{BS01} we
define a topology on the set of finite maps: If $m,m'$ are two rooted maps,
the local distance between $m$ and $m'$ is
\[
\mathrm{d_{loc}}(m,m') = \big(1+R)^{-1},
\]
where $R$ is the maximal radius so that $B_R(m)$ is isomorphic to
$B_R(m')$.  Here, $B_r(m)$ is the ball of radius $R$ in $m$ around the
origin, namely the map formed by the edges and vertices of $m$ that  are at
graph distance smaller than or equal to $R$ from the origin.
The set of finite maps is not complete for this metric and so we shall work
in its completion which also includes  infinite maps (see \cite{CMMinfini}
for a detailed exposition and references).

In this work, we focus on two specific kinds of planar maps:
triangulations (all faces have degree $3$) and quadrangulations (all faces
have degree $4$).  We also split the set of triangulations according to
their connectivity properties: A $1$-connected triangulation is just a
(connected) triangulation and a $2$-connected triangulation is a
triangulation with no cut-vertex.  It is easy to see that a triangulation
can only fail to be $2$-connected if some vertex has a self loop (an edge
whose target and origin vertices are confounded).

In the following, all the quantities referring to $1$ or $2$-connected
triangulations are denoted with the symbols $\one, \two$, and the ones
referring to quadrangulations are denoted with the symbol $\square$.  To
make statements that hold simultaneously about various types of maps we
shall use $*$'s to indicate one of those, or possibly some other type of
planar map (since our methods work in much greater generality).

 
We review the now classical construction of the Uniform Infinite Planar
Maps as weak local limits w.r.t.\ $\mathrm{d_{loc}}$ of uniform finite
maps.  Let $* \in \{\total, \square\}$ and for $n\geq 0$ we write $M_n^{*}$ for a
random variable uniformly distributed over the set of type-$*$ maps with
$n$ vertices. Then we have the following convergence in distribution for
$\mathrm{d_{loc}}$
\begin{equation}
  M_n^{*}  \xrightarrow[n\to\infty]{(d)}  M_\infty^{*}.   \label{UIPM}
\end{equation}
The object $M_\infty^{*}$ is a random infinite rooted planar map called the
($1$ or $2$-connected) Uniform Infinite Planar Triangulation (UIPT) if $ *
\in \{ \total\}$ and the Uniform Infinite Planar Quadrangulation (UIPQ) if
$* = \square$.  The convergence \eqref{UIPM} was established by Angel \&
Schramm \cite{AS03} in the triangular case $* \in \{\total\}$ and by Krikun
\cite{Kri05} in the quadrangulation case $*=\square$.  The UIPQ has also
been constructed by other means by Chassaing \& Durhuus \cite{CD06}, see
also \cite{CMMinfini}.  These random lattices have attracted a great deal
of attention in recent years \cite{Ang03, BCsubdiffusive, GGN12, Kri08,
  LGM10}.  Their large scale geometry is still a source of intensive
research and is tightly connected (see \cite{CLGMplane}) to the Brownian
map --- the universal continuous random surface obtained as the scaling
limit of properly renormalized random planar maps --- studied by Le Gall
and by Miermont \cite{LG11,Mie11}.
 
An important area of research on the random lattices ${M}_{\infty}^*$ is to
understand the behavior of statistical mechanics models on them.  Angel
\cite{Ang03} already studied site percolation on the UIPT and in particular
proved that the critical percolation threshold is almost surely
$\frac{1}{2}$.  In this paper, we extend this analysis to several other
types of percolation, including both bond and site on both the map and its
plane dual.

We pursue the analysis of percolation on random maps by focusing on
half-planar models.  These models indeed have an especially useful spatial
Markov property which makes the analysis of the percolation process much
simpler (see \cite{AR13} for a study of this property).  These pages can
thus be seen as a step towards the analysis of percolations on the
full-plane UIP$*$, which we do in a subsequent paper \cite{ACpercopeel2}.

In order to construct these half-plane models we first extend \eqref{UIPM}
to maps with a \emph{boundary}: A triangulation (or a quadrangulation) with
a \emph{boundary} is a planar map whose faces are triangles
(resp.\,quadrangles) except the face incident on the right of the
distinguished oriented edge which can be of arbitrary degree. This face is
called the external face. The perimeter $\partial m$ of a map $m$ with a
boundary is the degree of the external face.
In general, the boundary of a map $m$ can possess ``pinch-points'', that
are vertices visited at least twice during the contour of the external
face.  If the boundary does not have pinch-points we say that the boundary
is {\em simple}, or that $m$ is simple.

\begin{figure}[!h]
  \begin{center}
    \includegraphics[width=10cm]{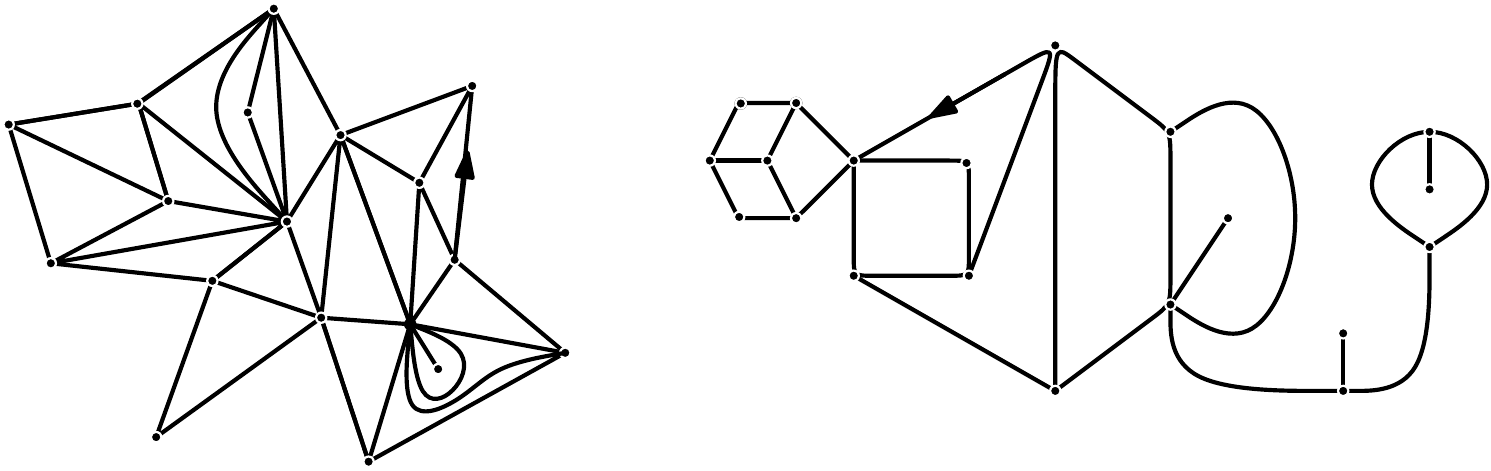}
    \caption{A type $1$ triangulation (note the triangle inside the
      self-loop) with simple boundary and a quadrangulation with general
      boundary.}
  \end{center}
\end{figure}
 
In the following, all the maps with a boundary that we consider are simple
and a map with simple boundary of perimeter $p\geq 1$ is also called a map of
the $p$-gon.  For $n \geq 0$ and $p \geq 1$, we denote by
$\mathcal{M}^{*}_{n,p}$ the set of all type-$*$ maps of the $p$-gon with
$n$ inner vertices.  Note that since quadrangulations are bi-partite,
$\mathcal{M}^\square_{n,p} = \emptyset$ for $p$ odd, hence in the case of
quadrangulations we implicitly restrict all statements to $p$ even.
For $p \geq 1$, let $M^{*}_{n,p}$ be a random variable uniformly
distributed over the sets $\mathcal{M}_{n,p}^{*}$. Then we have the
following convergences in distribution for the distance
$\mathrm{d_{loc}}$:
\[
M_{n,p}^{*} \xrightarrow[n\to\infty]{(d)} M_{\infty,p}^{*}.
\]
 Extending the previous terminology we call these
objects the UIP$*$ of the $p$-gon.

The preceding convergences are easy corollaries of the convergences with no
boundary \eqref{UIPM}, proved by conditioning on the root having a suitable
neighborhood and removing that neighborhood to get the boundary (see
\cite{Ang03,CMboundary} for details).

We can now introduce the main characters of our work: the half-plane
UIP$*$. These are obtained as limit of the UIPT (resp.\,UIPQ) of the
$p$-gon as $p\to\infty$.  More precisely, we have the following convergence
in distribution for $ \mathrm{d_{loc}}$
\begin{equation}
  M_{\infty,p}^{*}  \xrightarrow[p\to\infty]{(d)}  \bM^*.
  \label{1/2UIPM}
\end{equation}
(As noted, in the case $* = \square$ the convergence holds along even
values of $p$.)  The random infinite planar map $\mathbf{M}^*$ is called
the half-plane UIPT (resp.\,UIPQ) which we abbreviate by UIHP$*$.  The
convergence \eqref{1/2UIPM} was established in \cite{Ang05} in the
case of triangulations and can be easily adapted to the quadrangulation case.
See also \cite{CMboundary} for a different construction of
$\mathbf{M}^\square$ via bijective techniques ``\`a la Schaeffer''
\cite{Sch98}.

\paragraph{Percolation.}
Having introduced the random lattices, let us specify the models of
percolation that we will discuss.  Conditionally on $\mathbf{M}^*$, we
consider Bernoulli percolation on the edges, vertices or faces, that is, we
color the elements of the map white with probability $p \in (0,1)$ and
black with probability $1-p$ independently from each other, and consider
the structure of connected white clusters.  If we color the edges, we speak
of bond percolation, if we color the vertices we speak of site percolation.
Coloring faces yields site percolation on the dual of the map (two faces
are adjacent if they share an edge) and will be called ``face percolation''
in this work.

In the triangular case $*\in\{\total\}$, site percolation has already been
analyzed in \cite{Ang05,Ang03} where it is proved that $p_{c,\site}^* =
\frac{1}{2}$. The techniques developed in this paper do not apply to site
percolation on general planar maps (other than triangulations) and for
example the value of $p_{c,\site}^\square$ is still unknown. However in the
case of \emph{bond} or \emph{face} percolation we prove that the critical
percolation thresholds are almost surely constant and can be expressed by a
universal formula relying on a unique parameter depending on the model.  To
give their values, we introduce for each model of planar map a quantity
$\delta^*>0$.  This quantity is well defined and can be computed for fairly
general models of planar maps.  In the main classes we study we have
\begin{eqnarray} \label{delta}
  \delta^\one = \frac{1}{\sqrt{3}}, \qquad \delta^{\two} = \frac{2}{3} \qquad
  \mbox{and} \qquad \delta^\square = 1.
\end{eqnarray}
\begin{theorem}[Percolation thresholds] \label{thm:thresholds}
  For $* \in \{\total, \square\}$, the critical thresholds for bond and face percolations are almost surely constant and are given
  by 
  \[
  p_{c,\bond}^* =  \frac{\delta^*}{2+\delta^*}
  \qquad \mbox{and} \qquad
  p_{c, \mathrm{face}}^* = \frac{\delta^*+2}{2\delta^*+2}.
  \]
\end{theorem}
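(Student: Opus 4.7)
The plan is to exploit the spatial Markov property of the UIHP$*$ via a peeling process that explores a percolation interface, following the scheme introduced by Angel \cite{Ang03} for site percolation on the UIPT. First I would impose boundary conditions on $\bM^*$ that force an interface starting at the root edge: for face percolation, declare the half-boundary to the right of the root to be bordered by a virtual white cluster and the half to the left by a virtual black cluster; for bond percolation, declare the two semi-infinite half-lines of boundary edges to be white on the right and black on the left. The interface is then the boundary between the white and black clusters in $\bM^*$, and we explore it one step at a time.

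At each step, peel the face adjacent to the current interface edge. By the Markov property, the unexplored region is still a UIHP$*$ with updated perimeter, so the revealed face is distributed according to the peeling law of $\bM^*$, whose key numerical features I expect to be captured by the parameter $\delta^*$. Independently of the past, the percolation colors of the newly revealed face or edges are Bernoulli$(p)$. Depending on whether the revealed face has a genuinely new apex or instead identifies two existing boundary vertices (thereby swallowing an interval of boundary), and on the colors of the new elements, the interface either advances to the next boundary edge or jumps over a swallowed region.

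The position of the interface on the boundary thus evolves as a random walk on $\Z$ whose step distribution is an explicit function of $p$ and of the peeling law (via $\delta^*$). I expect $p_c$ to coincide with the value for which this walk has zero drift: for $p$ above this threshold the interface drifts so that the white cluster of the root is infinite, while below it the walk drifts the other way and the white cluster is almost surely finite. Writing the expected displacement explicitly in terms of $p$ and $\delta^*$ and solving drift $=0$ should produce the announced formulas $p_{c,\bond}^* = \delta^*/(2+\delta^*)$ and $p_{c,\face}^* = (\delta^*+2)/(2\delta^*+2)$; the fact that both families depend only on the single number $\delta^*$ strongly suggests that $\delta^*$ enters the step distribution as a single overall normalization that commutes with the percolation bookkeeping.

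The step I expect to be the main obstacle is the clean extraction of $\delta^*$ from the peeling law of each model, particularly for the type-$1$ triangulations where self-loops make the peeling law subtler and plausibly explain the irrational value $\delta^\one = 1/\sqrt{3}$. A secondary technical point is the bond case, whose bookkeeping differs from the face case since the interface now traverses edges rather than separates faces, and one must track carefully the already-exposed color of the peeled edge and of the two new edges of the revealed face. Finally, translating the drift/recurrence dichotomy for the Markov chain into the genuine percolation statement that $p_c$ equals the drift-zero value requires ruling out any marginal behavior at criticality, which I would handle via a $0$--$1$ law using translation invariance of $\bM^*$ along its boundary together with monotonicity in $p$.
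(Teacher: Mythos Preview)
Your overall strategy---peel along an interface, reduce to a random walk, identify $p_c$ as the zero-drift point---matches the paper's. But there is a genuine gap in your treatment of bond percolation.

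You propose to ``track carefully the already-exposed color of the peeled edge and of the two new edges of the revealed face.'' This does not work: once you reveal the colors of the newly exposed edges, the boundary of the unexplored region can carry an arbitrary pattern of black and white edges near the interface (for instance, the two new edges of a revealed triangle could come out black--white in either order). No single summary statistic such as ``interface position'' captures this, the boundary condition is no longer of one preserved form, and the Markovian peeling structure you rely on is lost. The paper's key idea, which you are missing, is to \emph{refrain from} revealing the colors of newly exposed edges: keep them ``free'' (i.i.d.\ Bernoulli$(p)$, still independent of the unexplored map and its colors). The preserved boundary form is then free--white--black; at each step one tests the color of the rightmost free edge, and only if it is black does one peel at that edge, with the exposed edges joining the free segment uncolored. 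This yields $S_n = \big(S_{n-1} + \epsilon_n - (1-\epsilon_n)\cR^*_n\big)^+$, whose drift $p - (1-p)\,\delta^*/2$ vanishes exactly at the stated $p_{c,\bond}^*$.

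Two smaller points. First, the paper uses a \emph{finite} active segment (black--white--black for face, free--white--black for bond) rather than your half-infinite split, and tracks its length $S_n$; this makes ``the origin cluster is finite'' equivalent to ``$S_n$ is absorbed at $0$'', the natural random-walk event. Second, what you flag as the main obstacle---extracting $\delta^*$ from the peeling law---is in fact a routine computation once the one-step peeling probabilities are known; the substantive difficulty is precisely the bond bookkeeping you called secondary.
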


We prove that in each model considered in \cref{thm:thresholds}, at the
critical probability, there is no infinite cluster. We also study the
associated dual percolations, which in the case of bond percolation is just
bond percolation on the dual lattice (edges in a map are in bijection with
edges in the dual map, so bond percolation on a map and on its dual use the
same randomness, and are dual to each other) and prove the unsurprising
identity
\[
p_{c,\bond}^* = 1 - p_{c,\bond'}^*.
\]
In the case of face percolation, the dual percolation is the same
percolation but where faces are declared adjacent if they share a vertex.
We call it the face$'$ percolation; here also $p_{c,\face}^* = 1 -
p_{c,\face'}^*$.

The universal form of the critical probability thresholds expressed in
\cref{thm:thresholds} in terms of $\delta^*$ holds in a much larger list of
maps than the ones we consider in this work and could be applied, e.g.\ to
pentagulations, general planar maps or planar maps with Boltzmann
distribution, see \cite{MM07}.  The only quantity to compute would be the
equivalent of $\delta^*$ defined in \cref{prop:mean}. Notice also that
$p_{c, \bond}^*$ and $p_{c, \face}^*$ are functions of each other.  If an
oracle such as a physics conjecture or a self-duality property, etc.\
furnishes one of the two thresholds then \cref{thm:thresholds}
automatically gives the other.

In relation to \cite{Ang03}, one key idea which enables us to treat bond
percolation is to keep as much randomness as we can during the exploration
process. In other words, even after being discovered in the map, the status
of an edge can be kept random until it is necessary for the exploration
process to know its color.

\paragraph{Critical exponents.}
In contrast with the critical threshold values which depend on the local
features of model considered, we also compute a few critical exponents
which are not model-depend.  The exploration of percolation interfaces in
random maps involves random walks with heavy-tailed step distribution in
the domain of attraction of a totally asymmetric (spectrally negative)
stable law of parameter $3/2$.  Using standard results for heavy-tailed
random walks we are able to compute critical exponents related to the
perimeter (boundary) and the volume of critical percolations clusters.

For sake of simplicity we restricted our proof to the case of site
percolation on triangular lattices but there is no doubt that our methods
could be adapted to more general cases and would yield the same critical
exponents.  We now make our setting precise.  In $\bM^{\two}$ we consider
the hull $\cH$ of the cluster of a unique white vertex among a full black
boundary. That is, we fill the finite holes in the map created by the
cluster.  We will consider the volume $|\cH|$ of $\cH$ that is its number
of vertices and its boundary $|\partial\cH|$ which is the number of
vertices of $\cH$ adjacent to $\bM^*\setminus\cH$.  We shall also consider
the extended hull $\cH^1$ which is the hull formed by all the triangles
adjacent to $\cH$.  More precisely we are interested in the boundary
$|\partial\cH^1|$ of this extended hull.

\begin{figure}[!h]
  \begin{center}
    \includegraphics[width=8cm]{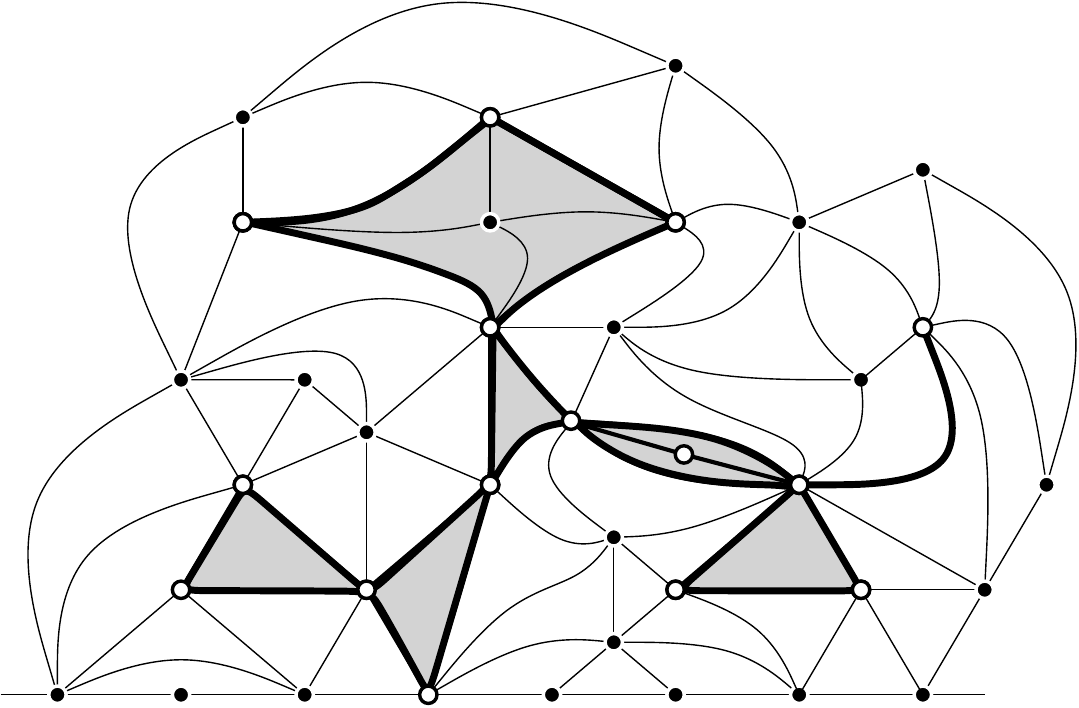}
    \caption{An example of cluster, its extended hull and its hull in
      gray.}
    \label{fig:hulls}
  \end{center}
\end{figure}
 
\begin{theorem}[Critical exponents]\label{thm:criticexpo}
  At the critical percolation threshold $p_{c} = 1/2$, we have the
  following estimates
  \begin{align*}
    &(i)   & \P_{p_{c}}( |\cH| >n)             &=  n^{-1/4 + o(1)}, & &\\
    &(ii)  & \P_{p_{c}}( |\partial \cH| > n)   &\asymp n^{-1/3}, & &\\
    &(iii) & \P_{p_{c}}( |\partial \cH^1| > n) &= n^{-1/2+o(1)}. & &
  \end{align*}
\end{theorem}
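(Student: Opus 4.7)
Our plan is to explore the hull $\cH$ via a peeling procedure and reduce all three estimates to fluctuation theory for a centered heavy-tailed random walk. Starting from the distinguished white boundary vertex of $\bM^{\two}$, we peel along the interface between the exposed cluster $\cC$ and the unexplored region. By the spatial Markov property of the UIHPT (the same one underlying \cref{thm:thresholds}) combined with the independence of the colors, the length $W_{t}$ of the white portion of the interface after $t$ peeling steps is a random walk with $W_{0}=1$ and i.i.d.\ increments $\xi$. A computation with the peeling law of $\bM^{\two}$ shows that
\[
  \P(\xi = +1) = c_{0}\in(0,1) \quad\text{and}\quad \P(\xi = -k) \asymp c\,k^{-5/2}\quad(k\ge 1),
\]
the first case corresponding to peeling a fresh vertex that turns out to be white, the second case corresponding either to a fresh black vertex ($k=1$) or to identifying an existing boundary vertex, which swallows a region of perimeter $\asymp k$. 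At $p_{c}=1/2$ the law of $\xi$ is centered, morally the same calculation that proves $p^{\two}_{c,\site} = 1/2$.

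Therefore $\xi$ lies in the domain of attraction of a spectrally negative $3/2$-stable law, whose positivity parameter is $\rho = 2/3$. The exploration terminates at $\tau := \inf\{t\ge 1 : W_{t}\le 0\}$. Classical fluctuation results for such walks (after Rogozin, Doney, Vatutin--Wachtel) yield
\[
  \P(\tau > n) \;\asymp\; n^{-(1-\rho)} \;=\; n^{-1/3},
\]
while the $3/2$-stable Donsker scaling $\max_{t\le\tau} W_{t} \asymp \tau^{2/3}$ gives $\P(\max_{t\le\tau} W_{t} > m) \asymp m^{-1/2}$.

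The last step is to identify the three geometric quantities with functionals of the walk, up to sub-polynomial errors. For (ii), $|\partial\cH|$ is comparable to the cluster itself $|\cC|$, which equals $1$ plus the number of up-steps of $W_{\cdot}$ before $\tau$ and hence is of order $\tau$, yielding the sharp $n^{-1/3}$ tail. For (iii), $|\partial\cH^{1}|$ coincides (up to $O(1)$) with the maximal total perimeter reached by the explored region, which by a law-of-large-numbers argument on the walk increments is of the same order as $\max_{t\le\tau} W_{t} \asymp \tau^{2/3}$, giving $n^{-1/2+o(1)}$. For (i), each swallow of perimeter $k$ fills in a free Boltzmann triangulation of the $k$-gon whose volume has mean $\asymp k^{2}$ and polynomial tails; the hull volume is dominated by the largest such swallow, of perimeter $\asymp \max_{t\le\tau}|\Delta W_{t}| \asymp \tau^{2/3}$, so $|\cH| \asymp \tau^{4/3}$ and $\P(|\cH|>n) = n^{-1/4+o(1)}$.

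The main obstacle is this final identification step, and in particular item (i), where one needs uniform tail estimates on the volumes of the swallowed Boltzmann triangulations to show that their total is indeed dominated by the largest one. These estimates are responsible for the $o(1)$ slack in (i) and (iii); no such slack appears in (ii) because it is a linear functional of the walk increments rather than an extremum or a sum of random volumes.
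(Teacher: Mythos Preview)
Your overall framework --- peel along the interface, read off a centered random walk $W$ in the domain of attraction of the spectrally negative $3/2$-stable law, and use the tail $\P(\tau>n)\asymp n^{-1/3}$ --- is exactly the paper's. The heuristic for $(i)$ (largest swallowed perimeter $\asymp\tau^{2/3}$, volume $\asymp\tau^{4/3}$) is also the same, though the paper does considerably more work to turn it into an upper bound: a Markov-inequality argument with truncation of the $\cR_i$'s via an exponential right-tail bound on $W$, a negative-correlation inequality between $\{\tau>i\}$ and $\{\cR_i>k\}$, and a separate treatment of the final overshoot step.

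There are, however, two genuine gaps in your identifications.

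\textbf{Item $(iii)$ is misidentified.} The boundary $\partial\cH^1$ is \emph{not} the running maximum of the white process $W$, nor is it ``the maximal total perimeter reached by the explored region''. It consists of the black vertices discovered on the \emph{left} of the peeling point that were never subsequently swallowed by a left-jump. To track this the paper introduces a second random walk $Y_i = Y_{i-1} + \mathbf{1}_{\cE_i=2}(1-\epsilon_i) - \cL_i$, built from the left-swallowed edges $\cL_i$ rather than the right ones, and shows $|\partial\cH^1| = Y_\tau - \underline{Y}_\tau$. The crucial subtlety is that $Y$ and $W$ are (essentially) independent: in continuous time each peeling step moves exactly one of them. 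Your exponent $1/2$ happens to come out right because $Y_\tau - \underline{Y}_\tau$ and $\max_{t\le\tau} W_t$ both scale like $\tau^{2/3}$, but the geometric picture and the argument are wrong; in particular your ``law-of-large-numbers argument on the walk increments'' does not connect $W$ to $\partial\cH^1$ at all.

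\textbf{Item $(ii)$ misses the last-step problem.} The number of up-steps of $W$ before $\tau$ counts only the white vertices discovered \emph{before} the final swallow; the cluster $\cC$ (and hence $\partial\cH$) extends into the region enclosed by the last right-jump, which the leftmost exploration never enters. The paper fixes this by running a second, mirror-image (rightmost) exploration with its own stopping time $\tau^r$, and uses $\tau^\ell\vee\tau^r \le |\widetilde{\partial\cH}| \le \tau^\ell+\tau^r$. Without this device you do not have a two-sided bound. (A minor aside: a freshly discovered black vertex gives $\xi=0$, not $\xi=-1$, so your description of the increment law is slightly off, though this does not affect the tail.)
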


Here and later we use the notation $A \asymp B$ to denote that $A/B$ is
bounded below and above by some absolute constants.  The estimates we get
in the proof are significantly more explicit than the above statement.  In
particular, for $(i)$ and $(iii)$ we get lower and upper bounds with only a
poly-logarithmic correction, see \cref{sec:exponents}.  Of course, we
expect these asymptotics to hold with no correction to the polynomial term.

As can be seen in the last theorem the perimeter of the hull and the
perimeter of the extended hull have completely different exponents: after
filling-in the ``fjords'' created by a percolation cluster we drastically
reduces its perimeter. This fact is well-known in the physicist literature
and it well understood for site percolation on the regular triangular
lattice thanks to the SLE processes.

One motivation for this research is the physics theory of $2$-dimensional
quantum gravity.  In particular the KPZ relation \cite{KPZ88} predicts
connections between critical exponents of statistical mechanics models on a
regular lattice and on a random lattice: if a set defined in terms of a
percolation process on a fixed regular lattice has dimension $2(1-x)$ and
and the corresponding set defined in terms of percolation has ``dimension''
$4(1-\Delta)$, then the KPZ relation states that
\[
x = \frac{\Delta(2\Delta+1)}{3}.
\]
For models other than percolation a similar relation holds, with the
coefficients in the quadratic relation given in terms of the so-called central
charge $c$ associated with the model.  While there is some recent progress
towards understanding this relation in the work of Duplantier \& Sheffield
\cite{DS11}, the KPZ relation remains unproved.  Our work gives another
strong indication that the relation does hold for critical percolation
interface and boundary of the hull generated, in other words for SLE$_{6}$
and SLE$_{8/3}$.  This relation
has already been checked for various other sets including e.g.\ pioneer
points of simple random walk \cite{BCsubdiffusive}.

\medskip

The paper is organized as follows. In \cref{sec:peeling} we present in a
unified way some enumeration results on random planar maps and introduce
the spatial Markov property as well as the peeling process which are the
key ingredients of this work. \cref{sec:threshold} is devoted to
identifying the critical threshold parameters presented in
\cref{thm:thresholds}. The last section uses classical results on
heavy-tailed random walk to get off-critical and critical exponents.

\paragraph{Acknowledgments.}
This work was started during overlapping visits of the authors to Microsoft
Research in 2010. We thank our hosts for their hospitality. We are also
grateful to Igor Kortchemski for sharing ideas and references about
discrete stable processes.

\section{Peeling process}
\label{sec:peeling}

\subsection{A few properties of $\bM^*$}

The random infinite half-planar maps $\bM^*$ for $* \in \{\total,
\square\}$ were obtained as local limits of uniform $*$-angulations of the
$p$-gon with $n$ faces by first letting $n \to \infty$ and then sending $p$
to infinity.  Since the distribution of uniform $*$-angulation of the
$p$-gon is invariant under re-rooting along the boundary the same property
holds true for $\bM^*$.  More precisely, $\bM^*$ has an infinite simple
boundary that can be identified with $\Z$, the root edge being
$0\to1$.  Then for every $k \in \Z$, the law of the UIHP$*$ re-rooted at
the edge $k \to k+1$ is the same as the original distribution of $\bM^*$.
Because of this invariance we allow ourselves to be imprecise at times
about the location of the root edge of $\bM^*$.

A less trivial property satisfied by $\bM^*$ is one-endness: it has been
proved in \cite{Ang05} in the triangulation case and in \cite{CMboundary}
for the quadrangulation case that $\bM^*$ almost surely has one end (recall
that a graph is one-ended if the complement of any finite subgraph $A$
contains a unique infinite connected component).  Roughly speaking, there is
a unique way to infinity in $\bM^*$.

However, the foremost property of $\bM^*$ is the spatial Markov property.
The half-planar model has the most simple form of spatial Markov property
which, roughly speaking, states that the complement of a simply connected
region of $\bM^*$ that contains the root edge (properly explored) is
independent of this region and is distributed according to $\bM^*$.  This
property, also called the domain Markov property in this context, is
explored further in \cite{AR13}.  In order to make this statement precise we
shall need some enumerative background.

\subsection{Enumeration}

We gather here several results about enumeration and asymptotic enumeration
of planar maps.  Recall that for $* = \total,\square$ respectively, and for $n \geq 0$, $p \geq 1$ we denote by $\cM^*_{n,p}$ the sets of all
type $1$ or $2$ triangulations and the set of quadrangulations of the
$p$-gon with $n$ inner vertices.  The reader should keep in mind that
$\cM^\square_{n,p} = \varnothing$ if $p$ is odd.  By convention the set
$\cM_{0,2}^*$ contains the unique map (with simple boundary) composed of a
single oriented edge.

All the results presented here can easily be deduced from the exact
formulae for $\# \cM_{n,p}^*$ (or the intermediate steps to reach them) and
can be found in \cite{GJ83} for $*=\two$, in \cite{Kri07} for $*=\one$
and in \cite{BG09} for $*=\square$.  By convention the asymptotics for
$\#\cM^\square_{n,p}$  only apply to even values of $p$.

\bigskip

For $n\geq 0$ and $p \geq1$ we have the following asymptotics for
$\# \cM_{n,p}^*$:
\begin{equation}
  \# \cM_{n,p}^{*} \underset{n\to\infty}{\sim}  C_*(p) \rho_{*}^n n^{-5/2}, 
  \label{equivalentn} 
\end{equation}
where
\[
\rho_{\one} = \sqrt{432},\qquad  \rho_ \two=27/2  \qquad \mbox{and} \qquad
\rho_\square = 12.
\]

The asymptotics \eqref{equivalentn} in general and the exponent $5/2$ are
typical to enumeration of planar maps and hold for many other classes of
planar maps. As in previous works and as we will see below, the exponent
$5/2$ plays a crucial role in the large scale structure of the random lattices.
Furthermore, the functions $C_*$ also have a universal asymptotic behavior:
\begin{equation}
  C_{*}(p) \underset{p\to\infty}{\sim}  K_* \alpha_*^p \sqrt{p},
  \label{equivalentp}
\end{equation}
where
\[
\alpha_{\one} = 12, \qquad \alpha_\two=9, \qquad \mbox{and} \qquad
\alpha_\square=\sqrt{54}.
\]
The exact values $K_{\triangle_{1}} = (36 \sqrt{2} \pi)^{-1},
K_{\triangle_{2}}=(54 \pi \sqrt{3})^{-1}$ and finally $K_\square=(8
\sqrt{3} \pi)^{-1}$  will not be relevant in what follows
but we furnish them for completness.  Thanks to the $n^{-5/2}$ polynomial
correction in the asymptotic \eqref{equivalentn} the series $\sum_{n\geq 0}
\# \cM^*_{n,p}\rho_*^{-n}$ converges and we denote its sum by $Z_*(p) <
\infty$.  In fact, for $*\in\allthree$, the functions $Z_*(p)$ can be
exactly computed and all exhibit an asymptotic behaviour of the form
$Z_*(p) \sim \kappa_* p^{-5/2} \alpha_*^{p}$ with $\kappa_*>0$, more
precisely we have
\begin{align*}
  Z_{\one}(p) &= \frac{(2p-5)!! 6^p}{8 \sqrt{3} p!} && \text{for } p\geq 2
  \quad \text{and}\quad  Z_{\one}(1) =  \frac{2 - \sqrt{3}}{4},\\ 
  Z_{\two}(p) &= \frac{(2p-4)!}{(p-2)!p!}\left( \frac{9}{4}\right)^{p-1}
  &&\text{for } p \geq 2,\\ 
  Z_{\square}(2p) &= \frac{8^p (3p-4)!}{(p-2)! (2p)!} 
  &&\text{for }p \geq 1. 
\end{align*}
(We use the notation $(2n+1)!!= (2n+1) (2n-1)... 3 \cdot1$ and $(-1)!!=1$.)
The reader may identify $Z$ as the partition function in the following
measure:

\begin{definition}
  The free $*$-Boltzmann distribution of the $p$-gon is the probability
  measure on $\bigcup_{n\geq 0}\cM_{n,p}^*$ that assigns a weight
  $\rho^{-n}_* Z_*(p)^{-1}$ to each map belonging to $\cM^*_{n,p}$.
\end{definition}

\subsection{The spatial Markov property}

\subsubsection{One-step peeling of $\bM^*$}
\label{one-step}

We now present the version of the spatial Markov property (also called the
domain Markov property \cite{AR13}) that we use.  This version describes
the conditional laws of the different sub-maps we obtain from $\bM^*$ after
conditioning on the face that contains the root edge.  We do not present
the proofs since they are contained in \cite{Ang05} for the case of
triangulations ($* = \one,\two$) and can easily be adapted to the case of
quadrangulations.  We do however include a rough sketch of the calculations
involved.
 
Let $\bM^*$ be a uniform infinite planar map of the half-plane.  Assume
that we reveal in $\bM^*$ the face on the left of the root edge, we call
this operation {\bf peeling at the root edge}. The revealed face can
separate the map into many regions and different situations may appear
depending on the type of planar map we consider.  Let us make a list of the
possibilities and describe the probabilities and the conditional laws for
each case.
 
\paragraph{Triangulation case.}

In this paragraph $* \in \{\total\}$.  We reveal the triangle that
contains the root edge in $\bM^*$.  Two cases may occur:

\begin{itemize} 
\item The revealed triangle could simply be a triangle with a third vertex
  lying in the interior of $\bM^*$, see \cref{fig:peelhalftrig1}(a).  This
  event appears with probability which we denote by $q_{-1}^*$, and it is easy
  to see from the convergences \eqref{UIPM} and \eqref{1/2UIPM} the
  asymptotics \eqref{equivalentn} and \eqref{equivalentp} that
  \[
    q^*_{-1} =  \lim_{p\to\infty} \lim_{n\to\infty}
    \frac{\# \cM^*_{n-1,p+1}}{\# \cM^*_{n,p}}
    = \frac{\alpha_*}{\rho_*}.
  \]
  We deduce that $q^\two_{-1} = 2/3$ and $q^\one_{-1} = 1/\sqrt{3}$.

  Furthermore, conditionally on this event, the remaining triangulation (in
  light gray in \cref{fig:peelhalftrig1}) has the same distribution as $\bM^*$.
  To be precise, we need to specify a root for this new map, but due to the
  translation invariance discussed above, any boundary edge will do. For
  example we may root it at the edge of the revealed triangle which is
  adjacent on the left of the original root edge. 

  \begin{figure}[h]
    \begin{center}
      \includegraphics[height=2.5cm]{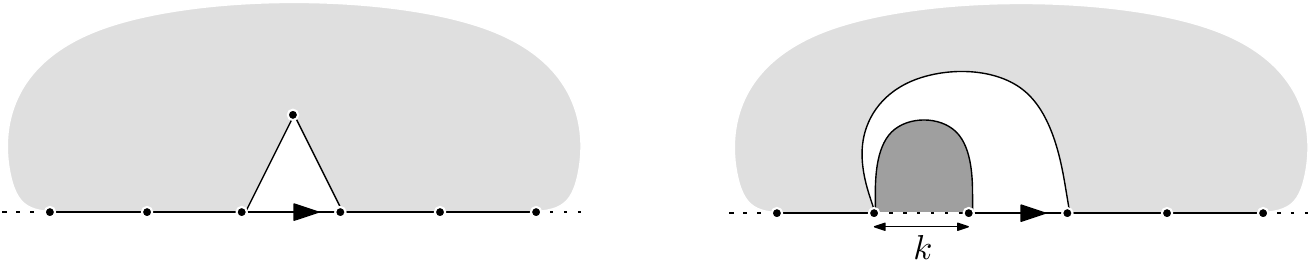}
      \caption{Cases when peeling a triangulation.  The map in the
        light gray area has the same law as the entire map.}
      \label{fig:peelhalftrig1}
    \end{center}
  \end{figure}

\item Otherwise, the revealed triangle has all of its three vertices lying
  on the boundary and the third one is either $k \geq 0$ edges to the left
  of the root edge or $k$ edges to the right of the root edge, see
  \cref{fig:peelhalftrig1}(b).  These two events have the same
  probability which we denote by $q^*_k$.  Notice first that when $*=\two$ we
  must have $k>0$ since loops are not allowed.  Here also, one can use
  \eqref{UIPM} and \eqref{1/2UIPM} to compute $q_k^*$, and we get
  
  \begin{align*}
    q^*_k &= \lim_{p \to \infty} \lim_{n \to \infty}
    \sum_{n_{1}+n_{2}=n}\frac{ \# \cM^*_{n_{1},k+1} \# \cM^*_{n_{2},p-k}}{ \#
      \cM^*_{n,p}} \\
    &= \lim_{p\to\infty} Z_{*}(k+1) \frac{C_{*}(p-k)}{C_{*}(p)}
     + Z_{*}(p-k) \frac{C_{*}(k+1)}{C_{*}(p)} \\
    &= Z_{*}(k+1) \alpha_*^{-k}.
  \end{align*}
  
  Furthermore, conditionally on the fact that the revealed triangle has its
  third vertex lying $k$ edges away from the root edge, the triangulation
  with finite simple boundary it encloses (in dark gray on
  \cref{fig:peelhalftrig1}(b)) is distributed according to a $*$-Boltzmann
  of the $k+1$-gon.  The remaining infinite part (in light gray on the
  figure) with arbitrary choice of root, is independent of the finite map
  enclosed and is distributed according to $\bM^*$.  The $k$ edges
  separating the root edge from the third vertex are called the {\bf
    swallowed boundary}.

\end{itemize}

\paragraph{Quadrangulation case.}
Let $\bM^\square$ be a half-plane UIPQ and let us reveal the quadrangle that
contains the root edge.  We have three different cases.

\begin{itemize}
\item The simplest of all is the case when the quadrangle containing the
  root edge has two of its vertices lying inside $\bM^\square$.  As for
  triangulations, we may compute the probability of this event to be
  \[
  q^\square_{-1} 
  = \lim_{p\to\infty} \lim_{n\to\infty} \frac{\# \cM^\square_{n-2,p+2}}{\#
    \cM^\square_{n,p}}
  = \left(\frac{\alpha_{\square}}{\rho_{\square}}\right)^2 = \frac{3}{8}.
  \]
  Here also, conditionally on this event, the remaining half-plane
  quadrangulation (rooted arbitrarily at the first edge of the revealed
  quadrangle) is distributed according to $\bM^\square$.
  \begin{center}
    \includegraphics[width=45mm]{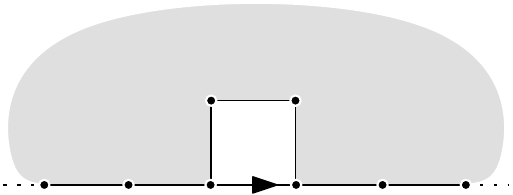}
  \end{center}
  
\item The revealed square could also have three vertices lying on the
  boundary of the map and one in the interior and separate the map into a
  region with a finite boundary and one with an infinite boundary.  This
  again separates into two sub-cases depending whether the third vertex is
  lying on the left or on the right of the root edge, by symmetry these
  events have the same probability.  Suppose for example that the vertex is
  on the left of the root edge.  This further splits according to whether
  the fourth vertex of the quadrangle lies on the boundary of the finite
  region or of the infinite region.  Since all quadrangulations are
  bipartite, this is determined by the parity of the number of edges
  between this third point on the boundary and the root edge, which we
  denote by $k$ when odd and by $k'$ when even (see figure below).

  \begin{center}
    \includegraphics[width=11cm]{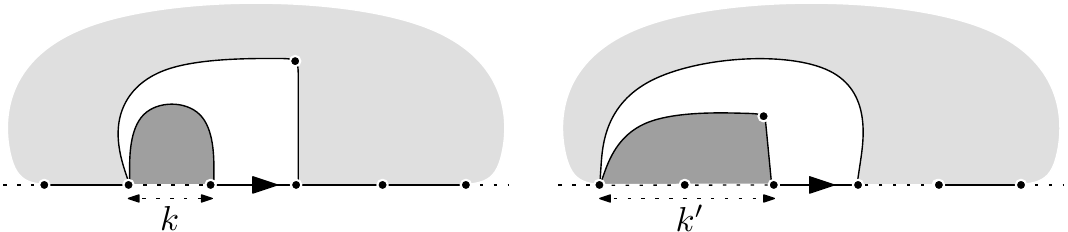}
  \end{center}
  
  If $k$ -- the length of the swallowed boundary -- is odd then the fourth
  point of the discovered square lies on the boundary which is exposed to
  infinity.  This event has a probability
  \[
  q^\square_k
  = \lim_{p\to\infty} \lim_{n\to\infty} \sum_{n_{1}+n_{2}=n-1}
  \frac{ \# \cM^\square_{n_{1},k+1} \# \cM^\square_{n_{2},p-k+1}}{ \#
    \cM^\square_{n,p}}
  = \frac{Z_{\square}(k+1)\alpha_{\square}^{1-k}}{\rho_{\square}}.
  \]

  On the other hand, if $k'$ -- the length of the swallowed boundary -- is
  even then the fourth point of the square must lie in the enclosed region
  and this event has a probability
  \[
  q^\square_{k'} = \lim_{p \to \infty} \lim_{n \to \infty}
  \sum_{n_{1}+n_{2}=n-1}\frac{ \# \cM^\square_{n_{1},k'+2} \#
    \cM^\square_{n_{2},p-k'}}{ \# \cM^\square_{n,p}}
  = \frac{Z_{\square}(k'+2) \alpha_{\square}^{-k'}}{\rho_{\square}}.
  \]

  In both cases, conditionally on any of these events the enclosed maps are
  $\square$-Boltzmann of the $\ell$-gon where $\ell = k+1$ or $\ell=k'+2$
  and the infinite remaining part is independent of it and has the same
  distribution as $\bM^\square$.
 
\item The last case to consider is when the revealed square has all of its
  four vertices on the boundary.  This could happen in three ways, as $0$,
  $1$, or $2$ vertices could be to the right of the root edge (see
  \cref{fig:peelhalfquad3}).  In this case the revealed quadrangle
  separates from infinity two segments along the boundary of lengths $k_1$
  and $k_2$ as depicted on the figure below.  The numbers $k_1$ and $k_2$
  must both be odd.  These events have the same probability
  \begin{align*}
    q^\square_{k_1,k_2} &=
    \lim_{p\to\infty} \lim_{n\to\infty}
    \sum_{n_1+n_2+n_3 = n} \frac{ \# \cM^\square_{n_1,k_1+1}
      \#\cM^\square_{n_2,k_2+1} \#\cM^\square_{n_{3},p-k_1-k_2}}
    {\#\cM^\square_{n,p}} \\
    &= Z_{\square}(k_1+1) Z_{\square}(k_2+1) \alpha_{\square}^{-k_1-k_2}.
  \end{align*}
  As in all other cases, conditionally on any of these events the three
  components are independent, the finite ones are $\square$-Boltzmann of
  proper perimeters and the infinite one is distributed as $\bM^\square$. 

  \begin{figure}[h]
    \begin{center}
      \includegraphics[width=12cm]{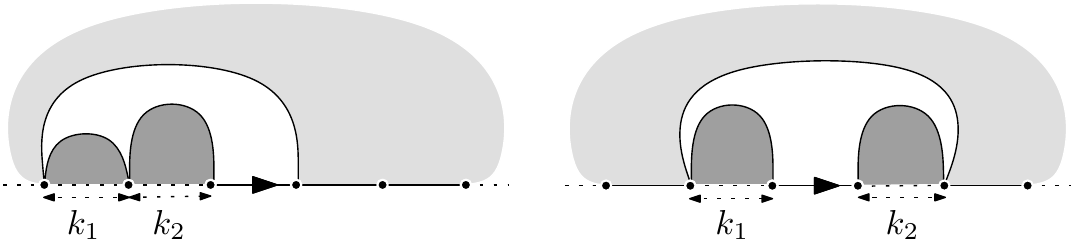}
      \caption{ Two of the ways a revealed quadrangle may have all its
        vertices on the boundary.}
      \label{fig:peelhalfquad3}
    \end{center}
  \end{figure}
\end{itemize}

\paragraph{General case.}
This method applies to more general maps, including $d$-angulations for any
$d$ (odd or even) as well as maps with mixed face sizes and a weight for
each face size.  In complete generality the analogues of
\eqref{equivalentn} and \eqref{equivalentp} are not known, though they are
believed to hold, and are known in some cases, most notably fairly general
bipartite maps \cite{BDFG04}.  In any class of maps where these asymptotics
hold, a similar peeling procedure may be applied. The number of cases grows
exponentially in $d$, as each vertex of the revealed face may or may not be
on the boundary, and in general some of the vertices may coincide.
However, the separated components of the map are always independent
Boltzmann maps, and are independent of the remaining infinite part which is
distributed as the half-plane model.  While the computational complexity of
such analysis increases quickly with $d$, it seems there is no conceptual
difficulty involved in generalizing our arguments to any specific $d$.

\subsubsection{Starring $\delta^*$}

Although the one-step peeling transitions in the cases of triangulations
and quadrangulations seem different they share several common key
properties which specify here.  To this end, let us introduce a few
notions.  Imagine that we reveal the face adjacent to the root edge in
$\bM^*$ as above.  The new face may enclose a finite region (or two) and
can surround some of the edges of $\partial \bM^*$. We call these edges the
{\bf swallowed edges}.

On the other hand, some edges of the new discovered face form a part of the
boundary of the remaining half-planar map.  These edges are called {\bf
  exposed edges}.  In the triangulation case there are two exposed edges
when the discovered triangle has only two vertices lying on the boundary
(first case) and one exposed edge otherwise.  In the quadrangulation there
are three exposed edges on the event of probability $q_{-1}^\square$, two exposed edges on the events of probabilities
$q_{k}^\square$ for $k\geq 1$ odd and only
one on the events of probabilities $q_{k'}^\square$ for $k'\geq 2$ even and
$q_{k_1,k_2}^\square$.  See \cref{exposed-swallowed}.

\begin{figure}[!h]
  \begin{center}
    \includegraphics[width=\textwidth]{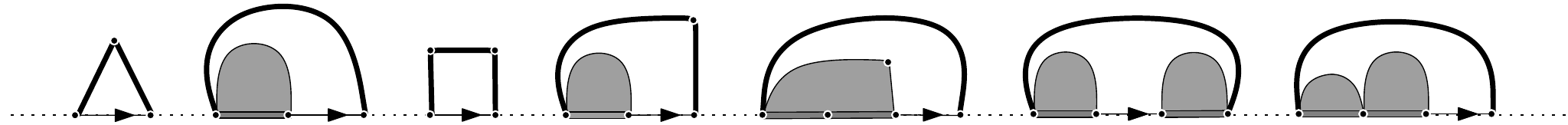}
    \caption{The exposed edges are in fat black lines and the swallowed
      ones are in fat gray lines.}
    \label{exposed-swallowed}
  \end{center}
\end{figure}

Let $\cE^*, \cR^*$ respectively be the number of exposed edges and the
number of edges swallowed \emph{to the right} of the peeling point when
revealing a single face. By symmetry, the number of edges swallowed to the
left of the peeling location has the same distribution as $\cR$. Of
course, the number of edges swallowed on the two sides, and $\cE$ are not
independent.  We now define 
\begin{equation}
  \label{eq:deltadef}
  \delta^* := \E[\#\mathrm{Swallowed\ edges}] = 2\E[\cR].
\end{equation}
We will see that $\delta^*$ plays a key role in determining percolation
thresholds on these infinite maps.

\begin{proposition}\label{prop:mean}
  We have
  \begin{align}\label{mean}
    \E [\cE^*] &= 1+\delta^* & \mbox{and}&&\E[\cR^*] &= \frac{\delta^*}{2}.
  \end{align}
  Moreover, for $*\in\allthree$ we have
  \begin{align}
    \delta^\one &= \frac1{\sqrt{3}}, & \delta^{\two} &= \frac{2}{3}, \qquad
    \text{and} & \delta^\square &= 1.
  \end{align}
\end{proposition}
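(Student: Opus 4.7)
The plan is to exploit the complete case analysis of the one-step peeling established in the preceding subsection. For each type $*$, the random variables $\cE^*$, $\cR^*$ and its left analogue $\cL^*$ are deterministic functions of the peeling event whose probabilities are explicitly known in terms of $Z_*$, $\alpha_*$ and $\rho_*$, so the whole statement reduces to evaluating a few explicit sums.

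A conceptually clean route runs through a deterministic Euler-type identity that holds in every peeling case. The peeled face has degree $D\in\{3,4\}$ and its $D$ edges partition into the root edge, the $\cE$ exposed edges, and $D-1-\cE$ edges that lie on the boundary of the finite enclosed regions. Since each enclosed region is bounded by its swallowed segment together with the incident face edges, summing perimeters over enclosed regions yields $P_{\mathrm{enc}} = \cR + \cL + (D-1-\cE)$, where $P_{\mathrm{enc}}$ is the total perimeter of the finite components cut off by the peeling. Rearranging, taking expectations, and using the left-right symmetry $\E[\cR^*]=\E[\cL^*]=\delta^*/2$, one obtains
\[
\E[\cE^*] \;=\; D-1-\E[P_{\mathrm{enc}}^*] + \delta^*.
\]
The first assertion of the proposition is therefore equivalent to the identity $\E[P_{\mathrm{enc}}^*] = D-2$.

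To verify this identity and to compute $\delta^*$ in each model, I substitute the peeling probabilities (for instance $q^*_k = Z_*(k+1)\alpha_*^{-k}$ in the triangulation case) and the closed forms for $Z_*(p)$ recorded in the enumeration subsection. In all three models the factorials in $Z_*(p)$ recombine into Catalan-like factors, so the relevant sums reduce to variants of the classical evaluations $\sum_{n\geq 0} C_n 4^{-n} = 2$ and $\sum_{n\geq 0} C_n/((n+2)4^n) = 2/3$. For $*=\two$, say, $Z_{\two}(p)\alpha_{\two}^{-(p-1)}$ is proportional to $C_{p-2}/4^{p-1}$, and a single geometric--Catalan summation yields simultaneously $\E[P_{\mathrm{enc}}^{\two}]=1$ and $\delta^{\two}=2/3$.

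The main obstacle is therefore purely computational. One has to accommodate the exceptional perimeter-one term $Z_{\one}(1) = (2-\sqrt{3})/4$ in the $\one$-case and, in the $\square$-case, combine the parity-split sums over $k$ (odd versus even, giving different face configurations) with the two-index sum coming from the $q^\square_{k_1,k_2}$ events. None of these steps involves any conceptual subtlety: the universal form $\E[\cE^*]=1+\delta^*$ is explained by the deterministic Euler relation above, the equality $\E[\cR^*]=\delta^*/2$ is by definition of $\delta^*$, and the three numerical values of $\delta^*$ drop out of the explicit Catalan-type summations.
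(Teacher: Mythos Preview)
Your proposal is correct and amounts to the same direct computation the paper performs (and whose details the paper omits, deferring to a computer-algebra check). Your deterministic edge-count identity $P_{\mathrm{enc}}=\cR+\cL+(D-1-\cE)$ is valid in every peeling case, but note that it is just a rewriting of the paper's own post-proof observation that the boundary-length increment $\cE-1-\cR-\cL$ has mean zero: indeed $\cE-1-\cR-\cL=(D-2)-P_{\mathrm{enc}}$ identically, so your target $\E[P_{\mathrm{enc}}]=D-2$ and the paper's zero-mean statement are literally the same assertion. The paper explicitly remarks that a conceptual (non-computational) proof of this identity would be desirable; your Euler relation, being a deterministic tautology, does not supply one, and you correctly fall back on the explicit Catalan-type summations to verify it and to extract the numerical values of $\delta^*$. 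In short, both arguments are the same computation, with the bookkeeping organised slightly differently; your claim that the Euler relation ``explains'' the universal form $\E[\cE^*]=1+\delta^*$ is therefore a little overstated.
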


\begin{proof}
  Both statements follow from a direct computation using the exact
  expression of the probabilities $q^*_.$ and the enumerative formulae of
  the last section.  We omit the details, though the result is easily and
  reliably verified in a computer algebra system such as
  Mathematica\texttrademark\ or Maple\texttrademark.
\end{proof}

\begin{remark}
  The identity \eqref{mean} should hold for any reasonable class of planar
  maps.  It would be nice to have a conceptual explanation for it, rather
  than a computational proof, perhaps in terms of singularities of
  generating functions.
\end{remark}

Note that in the triangulation case this implies that $\delta^*$ is simply
equal to $q_{-1}^*$, since \mbox{$\cE\in\{1,2\}$}.  The relation between
the expected number of swallowed and exposed edges can also be interpreted
as follows.  During the exploration of a face the change in the length of
the boundary of the external infinite half-plane map has zero
expectation. Indeed the initial edge at which we peel, together with the
swallowed edges are no longer on the boundary, and the exposed edges are
added.

Note however that the number of exposed edges is always bounded by $2$ in
the triangulation case and by $3$ in the quadrangulation case, whereas the
number of swallowed edges has a heavy tail.  Indeed, $\cR^*$ has a
heavy-tail of index $5/2$ that is
\[
\P(\cR^* = k) \sim  c^* k^{-5/2} \qquad \text{as } k\to\infty.
\]
In particular, $\cR^*$ is in the domain of attraction of a spectrally
negative $ \frac{3}{2}$-stable random variable, a fact we use in
\cref{sec:exponents}.

\subsubsection{Markovian exploration: the peeling process}

Based on the description of the one-step peeling of the root edge one can
define a growth algorithm for random maps, {\bf the peeling process}, that
was first used heuristically by physicists (see \cite{Wat95} and
\cite[Section 4.7]{ADJ97}) in the theory of dynamical triangulations.
Angel \cite{Ang05,Ang03} then defined it rigorously and used it to study
the volume growth and site percolation on the uniform infinite planar
triangulation $M_\infty^*$ for $* \in \{\total\}$.  See also
\cite{BCsubdiffusive} where the peeling has been used to study the simple
random walk on the UIPQ.  We adapt these ideas to the context of half-plane
UIP$*$.

\medskip

Let $M$ be an infinite $*$-angulation with an infinite simple boundary. If
$a$ is an edge on the boundary of $M$ we denote the {\bf one-step peeling
  outcome} by $\mathrm{Peel}(M,a)$.  This is the map obtained from $M$ by
``removing'' the submap made of the face adjacent to $a$ together with any
finite regions this face encloses, see \cref{one-step}.  This map is rooted
as in the previous section.

A {\bf peeling process} is a randomized algorithm that consists of
exploring $\bM^*$ by revealing at each step one face, together with any
finite regions that it encloses.  More precisely, it can be defined as a
sequence of infinite $*$-angulations with infinite boundary $\dots \subset
\bM_1^* \subset \bM_0^* = \bM^*$ such that for all $i>0$
\[
\bM_i^* = \mathrm{Peel}(\bM^*_{i-1}, a_i)
\]
for a (necessarily unique) edge $a_i$ on the boundary of $\bM_{i-1}^*$.  We
denote the revealed part by $P^*_i$.  This consists of all faces of $\bM^*$
not in $\bM^*_i$, and all vertices and edges contained in them.  Moreover
the choice of the edge $a_i$ should be independent of the unrevealed part
$\bM_{i-1}^*$.  That is $a_i$ can be chosen by looking at the revealed part
$P^*_{i-1}$ made of the union of all the faces revealed and the finite
regions they enclose up to step $i-1$ and possibly an independent source of
randomness which is independent of $\bM_{i-1}^*$.  Note that many different
algorithms can be used in order to choose the next edge to reveal.  The
only constraint is that we do not used information from the undiscovered
part.  Under these hypotheses we have

\begin{proposition}\label{prop:vraipeeling}
  Let $\dots\subset \bM_1^* \subset \bM_0^* = \bM^*$ be a peeling process
  then
  \begin{enumerate}
  \item for every $i \geq 0$, $\bM^*_i$ is distributed as $\bM^*$ and is
    independent of $P_i^*$,
  \item the sequence of pairs $(\cE^*_i,\cR^*_i)_{i\geq 1}$ representing
    the number of exposed edges and the number of edges swallowed to the
    right of the peeling edge $a_i$ for $i \geq 1$ is an i.i.d.\ sequence
    with mean given by \cref{prop:mean},
  \item for $* \in \{\total\}$ these have distribution
    \[
    \P \big( ( \cE_i^*, \cR_i^*) = (e,r) \big) = \begin{cases}
      q^*_{-1} & (e,r) = (2,0),\\ 
      q^*_0 + (1-q^*_{-1})/2 & (e,r) = (1,0) \\
      q^*_k    & (e,r) = (1,k), k>0.
    \end{cases}
    \]
  \end{enumerate}
\end{proposition}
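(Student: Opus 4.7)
The plan is to prove (1) by induction on $i$, with (2) and (3) following as more or less immediate consequences of (1) combined with the one-step peeling analysis of \cref{one-step}.

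The base case $i = 0$ is trivial since $\bM^*_0 = \bM^*$ and $P^*_0$ is empty (or a single boundary edge). For the inductive step, assume that $\bM^*_{i-1}$ has the law of $\bM^*$ and is independent of $P^*_{i-1}$. By definition of the peeling algorithm, the edge $a_i$ is a measurable function of $P^*_{i-1}$ together with an auxiliary randomization $U_i$ independent of $\bM^*_{i-1}$; in particular $a_i$ is independent of $\bM^*_{i-1}$. Condition on $(P^*_{i-1}, U_i)$: under this conditioning $a_i$ becomes a deterministic boundary edge of $\bM^*_{i-1}$, while the conditional law of $\bM^*_{i-1}$ is unchanged and equal to that of $\bM^*$. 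Using translation invariance of $\bM^*$ along its boundary we re-root at $a_i$, and then apply the one-step Markov property of \cref{one-step} to the re-rooted map. This produces a newly revealed piece (the peeled face together with any finite regions it encloses), whose law is given by the tables in \cref{one-step}, and which is independent of $\bM^*_i = \mathrm{Peel}(\bM^*_{i-1}, a_i)$, with $\bM^*_i$ distributed as $\bM^*$. Removing the conditioning on $(P^*_{i-1}, U_i)$ yields that $\bM^*_i$ is independent of the full revealed region $P^*_i$ and has the distribution of $\bM^*$, completing the induction.

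Property (2) is a direct corollary. The pair $(\cE^*_i, \cR^*_i)$ is a deterministic function of the newly revealed piece $P^*_i \setminus P^*_{i-1}$. The argument above shows that, conditionally on $(P^*_{i-1}, U_i)$ and hence on $(\cE^*_1, \cR^*_1), \dots, (\cE^*_{i-1}, \cR^*_{i-1})$, the distribution of this new piece is the one-step law of \cref{one-step}, which does not depend on the history. Hence $(\cE^*_i, \cR^*_i)_{i\geq 1}$ is i.i.d., with common law given by the one-step description; the mean computation is \cref{prop:mean}.

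For (3) it remains to rewrite the one-step distribution in the triangulation case in terms of the pair $(\cE, \cR)$. The event $\cE = 2$ is exactly the event that the revealed triangle has its third vertex in the interior of the map, which has probability $q^*_{-1}$ and contributes $(\cE, \cR) = (2, 0)$. Otherwise $\cE = 1$ and the third vertex sits on the boundary, $k \geq 0$ edges away from $a_i$ on the left or on the right, each possibility occurring with probability $q^*_k$. Swallowed boundary on the left does not count in $\cR$, so the value $(1, k)$ for $k \geq 1$ comes only from the right side (probability $q^*_k$), while $(1, 0)$ is obtained from the right side at distance $0$ (probability $q^*_0$) and from any distance $k \geq 0$ on the left. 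Summing the latter contributions, $\sum_{k\geq 0} q^*_k = (1 - q^*_{-1})/2$ since the total probability equals $1$, giving the announced formula.

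The only delicate point in this argument is keeping track of conditioning during the inductive step: one must verify that the auxiliary randomness $U_i$ does not spoil the Markov property (it does not, since it is independent of $\bM^*_{i-1}$) and that the finite regions enclosed by the peeled face, though random and possibly large, are bundled into $P^*_i$ rather than $\bM^*_i$. These are precisely what the one-step formulation of \cref{one-step} guarantees, so the induction goes through cleanly.
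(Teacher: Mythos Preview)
Your proposal is correct and follows essentially the same approach as the paper: induction on $i$ using translation invariance plus the one-step peeling description for (1), with (2) and (3) read off directly from the one-step law. Your treatment is in fact a bit more explicit than the paper's about the auxiliary randomness $U_i$ and about why $\sum_{k\geq 0} q^*_k = (1-q^*_{-1})/2$, but the argument is the same.
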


The explicit distribution of $(\cE^\square, \cR^\square)$ can be computed
from the description and formulae in \cref{one-step}, but will not be used
in the following.

\begin{proof}
  We prove the first statement by induction.  Suppose that at step $i+1\geq
  1$ the as yet unrevealed part $\bM_i^*$ is independent of the revealed
  part $P_i^*$ and is distributed as a standard UIHP$*$.  We then pick an
  edge $a_{i+1}$ on the boundary of $\bM_i^*$. Since the choice of this
  edge is independent of $\bM_i^*$ itself, the map $\tilde{\bM}_i^*$
  obtained by re-rooting $\bM_i^*$ at $a_{i+1}$ is also distributed as the
  UIHP$*$.  We can thus reveal the face in $\tilde{\bM}_i^*$ adjacent to
  this edge and deduce from the previous section that $\bM_{i+1}^* =
  \mathrm{Peel}(\tilde{\bM}_i^*, a_{i+1}) = \mathrm{Peel}(\bM_i^*,a_{i+1})$
  is independent of the union of $P_i^*$ and of the finite regions
  discovered by this operation.

  The second point easily follows from these considerations, and the third
  from the description of $q^*_k$ above.  The additional term in the case
  $k=0$ comes from the event that the revealed triangle has its third
  vertex to the left of the root edge, which has probability
  $\frac12(1-q^*_{-1})$.
\end{proof}

\section{Percolation thresholds} \label{sec:threshold}

We now use the peeling exploration described in the last section in order
to study percolation on UIHP$*$. The key idea being as in \cite{Ang03} to
explore the (leftmost) percolation interface. But this needs some care and
tricks depending on the type of percolation and lattice considered.\medskip

To help the reader getting used to the tools and methods, we start by
recalling the exploration of percolation interfaces in site percolation on
triangulations as developed in \cite{Ang05,Ang03}.  We then generalize this
exploration process to treat the case of face and bond percolations on
UIHP$*$.  As we will see, the exploration of site-percolation interfaces is
possible in the triangular lattice, but present methods fail for more
general lattices.  On the other hand our exploration of face and bond
percolations can be performed in virtually any class of map.\medskip

We begin with \cref{thm:site,thm:sitedual,thm:bond,thm:bonddual}, where we
study the cluster of the origin with special boundary conditions. The
structure of the proof of each of these theorems is as follows: First we
introduce a special boundary condition and a peeling algorithm. We then
check that the process leaves the form of the boundary condition invariant
and check that the exploration is Markovian in the sense of the previous
section.  For each model, a planar topological argument shows that the
peeling stops if and only if the cluster of the origin is finite.  We
finally relate the length of the ``active boundary'' during this
exploration to a random walk whose increments have a computable mean
expressed in terms of $p \in (0,1)$ and $ \delta^*$ only. The peeling
threshold is given when these increments have zero
mean. \cref{prop:quenched} then proves that the threshold probabilities
found in these results indeed correspond to the quenched critical
probabilities for percolation in the standard models.

\subsection{Site percolation on  triangulations}  \label{sec:siteperco}

Let $\bM^*$ for $* \in \{\total\}$ be a UIHPT.  Suppose that conditionally
on $\bM^*$ we color the vertices of $\bM^*$ with two colors (black and
white) as follows: We first color deterministically all the vertices of the
boundary in black except the extremity of the root edge that we color in
white.  We then color all the remaining vertices independently in white
with probability $p \in (0,1)$ and in black with probability $1-p$. This
yields site percolation on $\bM^*$ with mostly black boundary condition,
except for a single vertex.  Our goal is to study the white cluster $\cC =
\cC^*_\site$ containing the only white vertex of the boundary.  We shall
use $\cC$ when there is no risk of confusion, and make explicit the type of
lattice and percolation only when needed.

\begin{theorem}[\cite{Ang05,Ang03}]\label{thm:site}
  We have $\P(|\cC^*_\site| = \infty) > 0$ if and only if $p>p^*_{c,\site}$
  where
  \[
  p^{\one}_{c, \site} =  p^{\two}_{c, \site}  =  1/2.
  \]
\end{theorem}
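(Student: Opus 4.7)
The plan is to explore the white cluster $\cC$ by a Markovian peeling algorithm that peels, at each step, the unique interface edge separating the rightmost white boundary vertex from the first black vertex to its right; the algorithm stops when the boundary of the unexplored map has no white vertex. Let $W_n$ denote the length of the (necessarily single) white arc on the boundary of the unexplored map after $n$ peel steps, so $W_0=1$. A planar topological argument yields the equivalence $\{|\cC|=\infty\}=\{W_n>0 \text{ for all } n\}$: every revealed face is incident to a vertex of $\cC$, so a finite cluster is enclosed after finitely many peels and $W_n$ reaches $0$; conversely, an exploration that runs forever involves infinitely many interior-white peels (each with per-step probability $p\,q^*_{-1}>0$), and each such peel reveals a fresh vertex of $\cC$.

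By \cref{prop:vraipeeling} the outcomes of successive peel steps are i.i.d., and the Bernoulli$(p)$ colours of interior vertices are independent of the map and of all prior observations. The one-step description of \cref{one-step} then shows that, as long as $W_n$ is large enough to absorb any left-swallowing, the increment $\Delta W_n = W_{n+1}-W_n$ has law
\[
\Delta W_n = +1 \text{ w.p.\ } p\,q^*_{-1}, \qquad \Delta W_n = -k \text{ w.p.\ } q^*_k \ (k\ge 1), \qquad \Delta W_n = 0 \text{ otherwise},
\]
the value $-k$ corresponding to the peel triangle having its third vertex on the boundary $k$ edges to the left of the peel edge. Since for triangulations $\cE^*\in\{1,2\}$ with $\cE^*=2$ exactly on the interior event, \cref{prop:mean} gives $q^*_{-1}=\E[\cE^*]-1=\delta^*$, while $\sum_{k\ge 1}k\,q^*_k=\E[\cR^*]=\delta^*/2$; substituting these yields the key drift identity
\[
\E[\Delta W_n] \;=\; p\,q^*_{-1}\,-\,\E[\cR^*] \;=\; \delta^*\!\bigl(p - \tfrac{1}{2}\bigr).
\]

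I would then couple $(W_n)$ with an unrestricted random walk $(S_n)$ started at $1$ sharing the above (un-truncated) increment law, so that $W_n=S_n$ up to the stopping time $\tau:=\inf\{n:S_n\le 0\}$ and $|\cC|=\infty$ iff $\tau=\infty$. The positive jumps are bounded by $1$ and the negative jumps have finite mean $\delta^*/2$. For $p>1/2$ the drift is strictly positive, so $S_n\to+\infty$ almost surely by the strong law and a standard first-passage argument gives $\P(\tau=\infty)>0$. For $p<1/2$ symmetrically $S_n\to-\infty$ almost surely, forcing $\tau<\infty$. At the critical point $p=1/2$ the walk is centred and its step distribution lies in the domain of attraction of a spectrally negative $\tfrac{3}{2}$-stable law; since $S_n/n^{2/3}$ converges to a non-degenerate limit one has $\sum_n \P(|S_n|\le 1)=\infty$, so the walk is recurrent and $\tau<\infty$ almost surely. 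Altogether $\P(|\cC|=\infty)>0$ iff $p>1/2$, proving $p^*_{c,\site}=1/2$.

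The main obstacle I anticipate is the careful justification of the topological equivalence $\{|\cC|=\infty\}=\{W_n>0\ \forall n\}$: one must check that the white boundary remains a single connected arc throughout the process (interior-white vertices are always inserted adjacent to the peel edge, so no disjoint white arc can appear), and that a finite cluster cannot sustain an infinite sequence of peels (each peel triangle contains a vertex of $\cC$, and in the UIHPT a finite vertex set touches almost surely only finitely many faces). A secondary subtlety at $p=1/2$ is recurrence of the stable walk in absence of finite variance; this follows from the $n^{2/3}$ scaling of $S_n$ rather than by the classical finite-variance argument, which is why the comparison is done with a random walk rather than directly with the constrained process $W_n$.
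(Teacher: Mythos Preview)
Your approach is essentially the paper's own proof, only mirror-reflected: you peel at the right $\circ-\bullet$ junction rather than the left $\bullet-\circ$ one, but the invariant boundary pattern, the random-walk description of the white-arc length, and the drift computation $\E[\Delta W]=\delta^*(p-\tfrac12)$ are identical to the paper's \eqref{eq:explositeperco}. One small technical point: at $p=1/2$ your recurrence argument ``$S_n/n^{2/3}$ has a non-degenerate limit, hence $\sum_n \P(|S_n|\le 1)=\infty$'' is not justified by weak convergence alone---it would need a local limit theorem---but the conclusion is immediate by the more elementary fact (implicit in the paper) that any centred random walk with non-degenerate step law oscillates, so $\liminf S_n=-\infty$ and $\tau<\infty$ almost surely.
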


\begin{proof}
  The central idea is to explore along the (leftmost) percolation
  interface using a peeling procedure.  Let us describe precisely the
  algorithm to choose the edges to peel:
  \begin{quote}
    \textsc{Algorithm:} Assume that $\bM_i^*$ is a site-percolated UIHP$*$
    with a boundary condition of the form $\dots - \bullet - \circ - \dots
    - \circ - \bullet -\dots$ (i.e.\ all white vertices form a single
    finite connected segment).  Peel the edge $a_{i+1}:= \bullet - \circ$
    (which is well defined on the assumption).  If the peeling of $a_{i+1}$
    discovers a new vertex inside $\bM_i^*$ (case $1$ in \cref{one-step})
    then also reveal the color of this new vertex.
  \end{quote}
  
  There are a couple of easy facts to check to see that this indeed defines
  a peeling process.  First, it is easy to see that the form of the
  boundary condition black--white--black is preserved after peeling at the
  edge $a_{i}$ and possibly revealing the color of a new vertex. \emph{Here
    we use the fact that $\bM^*$ is a triangulation.}  Second, the
  edge $a_i$ chosen to be peeled at time $i \geq 1$ indeed depends on the
  submap $P_{i-1}^*$ discovered up to time $i-1$ as well as on the color of
  its vertices but clearly does not depend on $\bM_{i-1}^*$, nor on the
  color of its internal vertices.  Hence \cref{prop:vraipeeling} applies,
  at least as long as the boundary of $\bM_i^*$ contains a white region
  necessary to designate the next point to peel.

  \begin{figure}[!h]
    \begin{center}
      \includegraphics[width=\textwidth]{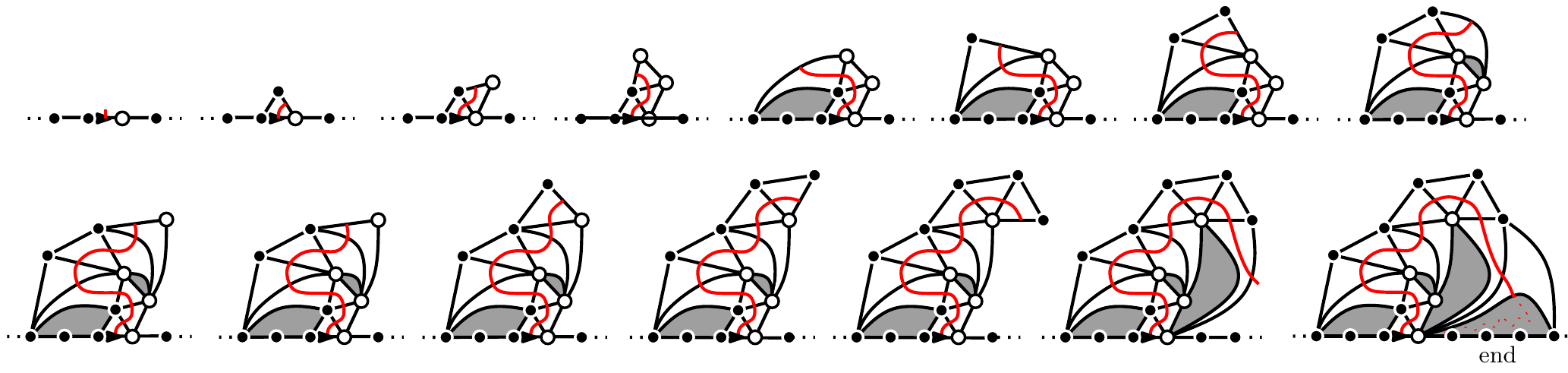}
      \caption{Exploration of the percolation interface in site percolation on a
        UIHPT. The gray part are the finite regions discovered during the
        peeling process. The interface is in red.}
      \label{fig:explorationsite}
    \end{center}
  \end{figure}

  It is easy to check that if this peeling algorithm is used from the very
  beginning then all the white vertices on the boundary of $\bM_i^*$ are
  part of the cluster of the white origin vertex.  This exploration process
  terminates at the first peeling step when the white boundary is
  ``swallowed'', that is, when the new discovered triangle makes a jump to
  the right of the peeling point and reaches the black boundary.  If that
  happens, a simple topological argument shows that the white cluster $\cC$
  must be finite (see \cref{fig:explorationsite}).  If the process does not
  terminate then $\cC$ is infinite.

  Let $S_i$ for $i\geq 0$ be the number of white vertices on the boundary
  of $\bM^*_i$. Thus $S_0=1$.  If $|\cC|=\infty$ then $S_i$ is defined and
  positive for all $i \geq 0$.  On the other hand, if $\cC$ is finite then
  $S_n=0$ for some $n$, after which the above peeling process is no longer
  defined.  (For completeness, we let $S_i=0$ for all $i>n$ in that case.)

  We let $\epsilon_i=1$ if the peeling of $a_i$ discovers a new vertex
  inside $\bM_{i-1}^*$ and if the color of this vertex is white, set
  $\epsilon_i=0$ otherwise.  Notice that conditionally on the fact that the
  face adjacent to $a_i$ in $\bM_{i-1}^*$ has a vertex lying inside
  $\bM_{i-1}^*$ (that is we discover two exposed edges) then $\epsilon_i$
  is a Bernoulli variable of parameter $p$, and is independent of
  $P_{i-1}^*$ and of its coloring.

  \begin{figure}[!h]
    \begin{center}
      \includegraphics[width=16cm]{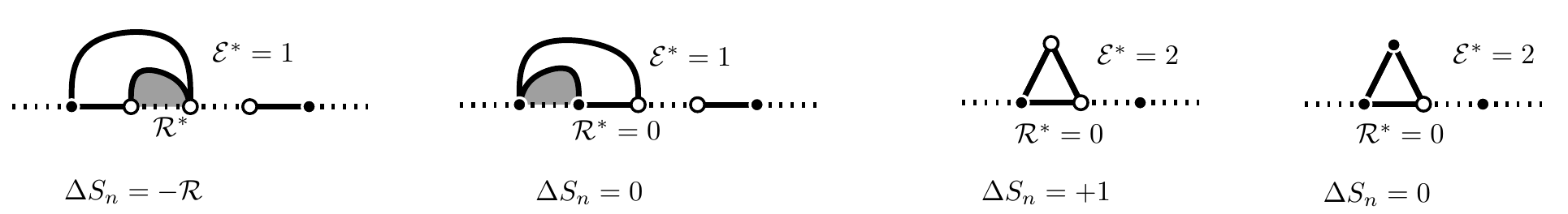}
      \caption{ The different one-step peeling transitions during the
        exploration of site percolation interface with $* \in
        \{\one,\two\}$.}
      \label{fig:casestrig}
    \end{center}
  \end{figure}

  Recall that $(\cE_i^*, \cR_i^*)$ are the number of exposed and swallowed
  edges during the $i$th step of peeling, and that as long as the white
  interface is not empty these are i.i.d.\ random variables whose
  distribution is given in \cref{prop:vraipeeling}.  Then we have the
  following relation between the sequences defined so far that holds for
  $*\in\{\total\}$:
  \begin{equation}\label{eq:explositeperco}
    S_{n} = \Big(S_{n-1} + \epsilon_n 1_{\{\cE^*_n=2\}} - \cR^*_n \Big)^+,
  \end{equation}
  as long as $S_{n-1}>0$ (where $X^+=X\vee0$), see \cref{fig:casestrig}.

  Hence the process $(S_n)$ is a random walk with i.i.d.\ steps starting
  from $1$ and killed (and set to $0$) at the first hitting time of
  $\Z^-=\{0,-1,-2,\dots\}$.  Furthermore the increments of this walk have
  mean
  \[
  \E [ \epsilon(\cE^*-1) - \cR^* ] = \delta^* (p-\frac{1}{2}).
  \]
  It follows that the cluster $\cC$ is almost surely finite if and only if
  $\E [\Delta S] \leq 0$, thus $p_{c,\site}^* = 1/2$ for $* \in \{\total\}$
  and that the cluster of the origin is finite at $p=p_{c,\site}^*$.
\end{proof}

{\sc Interface.}  It is easy to see that the above peeling process just
explores the leftmost interface of the cluster of the origin.  More
precisely, there is a well defined path in the dual graph separating $\cC$
from the black cluster containing the left part of the boundary.  At each
step of the peeling process we reveal a face along this interface.  We also
discover the color of the vertices in the same time we peel $\bM^*$.  Faces
visited by the interface correspond to peeling steps except that part of
the interface that is contained in the triangulation enclosed by the last
jump (in dotted red line on \cref{fig:explorationsite}).

\begin{remark}
  Note that it is essential that $\bM^*$ is a triangulation for the
  exploration of the interface of site percolation to work.  Indeed the
  boundary condition black--white--black may not be conserved during an
  exploration of site percolation on quadrangulations or other maps.  We do
  not know how to adapt these ideas for quadrangulations, and the value of
  $p^\square_{c,\site}$ is unknown. Note also that the fact that
  $p_{c,\site} =1/2$ for triangulations is not surprising since site
  percolation is self-dual on any triangulation.
\end{remark}

\subsection{Face percolation} \label{sec:dual_site}

We now use similar arguments to study face percolation on UIHP$*$ (two
faces are adjacent if they share an edge). Equivalently this is site
percolation on their dual lattices. This time we are not restricted to the
triangulation case. Our results are valid for $* \in \allthree$, and there
is no serious obstacle to deriving them for more general maps.

Let each face of $\bM^*$ be colored independently white with probability
$p\in (0,1)$ and black otherwise.  Face percolation does not have any
apparent boundary condition, but in a sense we explain now, it does.  Let
us color the infinite external face in black, which corresponds to an all black
boundary condition.  As far as percolation clusters are concerned, this is
equivalent to adding an extra face adjacent to each boundary edge and
colouring it black.  We do this for all boundary edges {\em except for the
  root edge}.  For the root edge, we add a white external face
see \cref{fig:explorationdual}.  We now consider
$\cC=\cC^*_{\face}$ to be the white cluster of this ``origin face'', and
show that it is a.s.\ finite if and only if $p \leq p_{c,\face}^*$.

\begin{theorem}\label{thm:sitedual}
  We have $\P(|\cC^*_{\face}| = \infty) >0$ if and only if $p >
  p^*_{c,\face}$ where
  \[
  p_{c,\face}^* = \frac{\delta^*+2}{2\delta^*+2}.
  \]
\end{theorem}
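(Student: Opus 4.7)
My plan is to follow the same template as the proof of \cref{thm:site}: explore $\cC^*_\face$ by a peeling process adapted to a convenient boundary condition, monitor the length $S_i$ of the active interface, and match it to a random walk whose drift vanishes exactly at the claimed threshold. The main new ingredient compared to site percolation is that the randomness of the percolation now lives on the faces, so the color of each face can be sampled on the fly, at the very moment the face is peeled.

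Call a boundary edge of $\bM_i^*$ \emph{white} or \emph{black} according to the color of the face adjacent to it on the already-revealed side; by the boundary condition of the theorem, the initial white set is the single root edge. I then peel at the leftmost white edge (when one exists) and sample the revealed face $f$ together with its independent Bernoulli($p$) color $c_i\in\{W,B\}$. The key invariant, verified by a short case analysis for both triangulations and quadrangulations, is that the white edges always form a connected (possibly empty) arc along $\partial\bM_i^*$: the $\cE_i^*$ exposed edges of $f$ are inserted contiguously in place of the peeled edge and all share the single color $c_i$, while swallowed left/right regions remove only contiguous runs of boundary edges. Since the peeling location is a function of the revealed part alone, \cref{prop:vraipeeling} applies and gives i.i.d.\ triples $(\cE_i^*,\cR_i^*,c_i)$.

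A planar-topology argument identical to the one in \cref{thm:site} then shows that $|\cC^*_\face|=\infty$ iff $S_i>0$ for every $i$: every face of the cluster is either already revealed, lies in a finite swallowed region, or is adjacent to a current white boundary edge. Provided the arc is not entirely swallowed at the next step (i.e.\ $\cR_{i+1}^*<S_i$), the length evolves as
\[
S_{i+1}-S_i \;=\; \mathbf{1}_{c_{i+1}=W}\,\cE_{i+1}^* \;-\; 1 \;-\; \cR_{i+1}^*,
\]
the $-1$ accounting for the peeled edge itself, the $-\cR_{i+1}^*$ for the swallowed white edges, and the positive part contributing $\cE_{i+1}^*$ fresh white edges exactly when the revealed face is white. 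By \cref{prop:mean} the mean step equals $p(1+\delta^*)-1-\tfrac12\delta^*$, which vanishes precisely at $p=(\delta^*+2)/(2\delta^*+2)$.

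I conclude with the usual trichotomy for random walks with i.i.d.\ increments. For $p$ below the threshold the drift is strictly negative and $S_i$ is dominated by a random walk that hits $0$ a.s., so $\cC$ is a.s.\ finite; for $p$ above the threshold the positive drift lets the associated walk escape to $+\infty$ with positive probability and a simple coupling transfers this to $S_i$, giving $\P(|\cC^*_\face|=\infty)>0$; and at $p=p_{c,\face}^*$, since $\cR^*$ has the $3/2$-stable tail recorded after \cref{prop:mean} while $\cE^*$ is bounded, the centered walk lies in the domain of attraction of a spectrally negative $3/2$-stable law, hence is recurrent, forcing $S_i$ to hit $0$ almost surely. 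The subtlety I expect to demand most care is the \emph{overshoot} regime $\cR_{i+1}^*\geq S_i$: there the arc is entirely swallowed in one step but may be immediately reborn with size $\cE_{i+1}^*$ when $c_{i+1}=W$, so $S_i$ is not literally an honest reflected random walk. This will be handled by sandwiching $S_i$ between two honest walks whose increments differ from the formula above only by bounded amounts, leaving the critical threshold unaffected.
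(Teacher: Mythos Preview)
Your proposal is correct and follows essentially the same approach as the paper: peel at the leftmost white boundary edge, reveal the color of the new face, and track the length $S_n$ of the white arc as (a small perturbation of) a random walk whose drift $p(1+\delta^*)-1-\tfrac{\delta^*}{2}$ changes sign at the claimed threshold. The only cosmetic difference is that the paper writes the recursion in closed form as $S_n=(S_{n-1}-\cR_n^*-1)^+ +\epsilon_n\cE_n^*$, which absorbs your ``overshoot regime'' directly rather than via a sandwiching argument.
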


\begin{proof}
  We adapt the exploration process of the last section.  Having added the
  starting white face outside $\bM^*$ and after coloring the infinite
  remaining face in black we have a black--white--black boundary condition
  similar to the situation with site-percolation.  Each edge of $\bM^*_i$
  is incident to one face inside and one outside of $\bM^*_i$.  The form of
  the boundary conditions that we maintain is as follows:

  \begin{quote}
    \textsc{Algorithm:} Assume that the boundary of $\bM_i^*$ is of the
    following form: there is a single connected finite segment along the
    boundary adjacent to white external faces and all other edges are
    adjacent to black faces.  That is we have a black--white--black
    boundary condition.  We then peel the leftmost edge $a_{i+1}$ of the
    white part and reveal the color of the discovered face.
  \end{quote}

  Here also a few checkings are in order.  The initial boundary condition
  is trivially of the required form.  Second, notice that the boundary
  condition black--white--black is preserved by the peeling of the leftmost
  ``white'' edge, and this is valid for $* \in \allthree$ (and indeed
  for any planar map).  Next, as before, the choice of the edge to peel
  stays independent of the unknown region and of its coloring.  
  
  If this algorithm is used from the beginning then every edge on the
  boundary of $\bM_i^*$ that is adjacent to a white face of $P_i^*$ is
  actually connected is the dual of the map $P_i^*$ to the origin white
  face.  Here also the exploration stops when there is no edge to peel and
  a simple topological argument shows that this happens after a finite
  number of steps precisely when the white cluster of the origin is
  surrounded by black faces and is consequently finite.  Otherwise the
  peeling process goes on forever (see \cref{fig:explorationdual}).
  
  \begin{figure}[!h]
    \includegraphics[width=\textwidth]{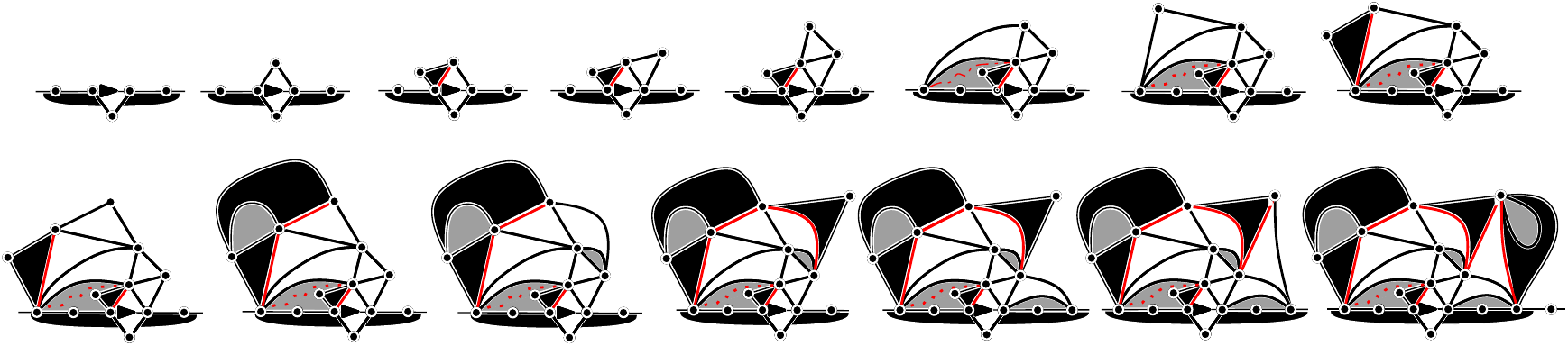}
    \caption{Exploration of the percolation interface in site percolation
      on the dual of a type-1 UIHPT.  The interface is in red.  Note that
      the external face is black, except for one white face added next to
      the root edge.}
    \label{fig:explorationdual}
  \end{figure}

  During this exploration process, we still denote by $(\cE^*_i,
  \cR^*_i)_{i \geq 1}$ respectively the number of exposed edges and the
  number of edges swallowed on the right of $a_i$ in the $i$th peeling
  step. We also let $\epsilon_i=1$ if the face discovered at time $i\geq 1$
  is white and $0$ otherwise.  Hence, the process $(\epsilon_i)_{i \geq 1}$
  is just a sequence of i.i.d.\ Bernoulli variables with parameter $p$, and
  are independent of $(\cE^*_i, \cR^*_i)$.

  Let $S_n$ denote the number of edges on the boundary of $P_n^*$ (or
  equivalently $\bM^*_n$) adjacent to a white face, and let $S_n$ be
  absorbed at $0$.  Then we have $S_0=1$ and as long as $S_{n-1}>0$
  \begin{equation}
    \label{eq:explositedualperco}
    S_{n} = \Big(S_{n-1} - \cR^*_n - 1 \Big)^+ + \epsilon_n \cE^*_n.
  \end{equation}
  Thus the process $(S_n)_{n\geq 0}$ is almost but not quite a random walk with
  i.i.d.\ increments killed at the first hitting time of $\Z^-$.  In
  particular, as long as $S_n$ is above $2$ for triangulations or $3$ for
  quadrangulations, its previous increment is just $\epsilon\cE^*-\cR^*-1$. 
  Still, it is easy to see that $S_n$ will a.s.\ reach $0$ if and only if
  \[
  0 \geq \E[ \epsilon \cE^*- \cR^* - 1]
  = p(1+\delta^*) - \frac{\delta^*}{2} - 1.
  \]
  which completes the proof.
\end{proof}

\textsc{Interface.} In this case also, the above peeling process roughly
follows the leftmost interface of the origin cluster. However, contrary to
site percolation on triangulation some parts of the interface (even before
the last jump) are not explored and are contained in enclosed maps: see the
red dotted line in dark gray parts on \cref{fig:explorationdual}.

\subsection{Bond percolation} \label{sec:bond}

We now turn to bond percolation.  Let us present the setting which is very
similar to the ones treated before.  Let $* \in\allthree$.  To treat bond
percolation a new type of boundary condition will be required.
Conditionally on $\bM^*$ we color the edges of $\bM^*$ with two colors
(black and white) with special boundary conditions: The root edge, and
every edge to its right along the boundary are black.  Every other edge of
the map is colored white with probability $p\in(0,1)$ and black with probability
$1-p$ independently.  Thus the boundary is half-free and half-black.  We
are interested in $\cC = \cC^*_\bond$: the connected white cluster of the
root vertex (the tail of the root edge).

\begin{theorem}\label{thm:bond}
  We have $\P(|\cC^*_\bond| = \infty) > 0$ if and only if $p >
  p^*_{c,\bond}$ where
  \[
  p^*_{c,\bond} = \frac{\delta^*}{2+\delta^*}.
  \]
\end{theorem}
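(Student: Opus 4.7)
The plan is to adapt the interface-exploration template of \cref{sec:siteperco,sec:dual_site} to bond percolation, exploiting the principle foreshadowed in the introduction of keeping the colors of edges random until the algorithm genuinely needs them. As in the previous subsections, the output will be a one-dimensional Markov chain $S_i$ absorbed at $0$, and the critical threshold will fall out of a drift computation.

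I would run a peeling process of $\bM^*$ together with a coloring satisfying the following invariant: the boundary of $\bM_i^*$ splits into a single contiguous arc of $S_i$ \emph{fresh} edges whose colors are not yet revealed (and hence, by the spatial Markov property, still i.i.d.\ Bernoulli$(p)$ conditional on everything revealed so far), plus edges that are either already known to be black or ``inert'' in the sense of being on the wrong side of the cluster--outside interface. At each step the algorithm reveals the color $\chi_{i+1}\sim\mathrm{Ber}(p)$ of a distinguished edge $a_{i+1}$ at the cluster-adjacent end of the fresh arc, then peels the face of $\bM_i^*$ adjacent to $a_{i+1}$. By \cref{prop:vraipeeling} and the independence of the fresh colors from the unrevealed map, the sequence $(\chi_{i+1},\cE^*_{i+1},\cR^*_{i+1})$ is i.i.d.\ and $\chi_{i+1}$ is independent of $(\cE^*_{i+1},\cR^*_{i+1})$.

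Next I would verify in each peeling case of \cref{one-step} that the updated boundary still has the form ``single fresh arc plus inert edges''. The newly exposed edges of the revealed face carry colors never touched by the algorithm, so they enter the new boundary fresh, and the case analysis amounts to checking that they glue to the old fresh arc in a connected way and that the swallowed edges are safely absorbed into the enclosed Boltzmann region. A planarity argument parallel to the ones already used shows that the cluster $\cC^*_\bond$ is finite if and only if $S_i$ hits $0$: pinching the fresh arc to zero length corresponds exactly to sealing the cluster off inside known black edges. Writing the increment $S_{i+1}-S_i$ as an explicit function of $(\chi_{i+1},\cE^*_{i+1},\cR^*_{i+1})$ on a case-by-case basis and using $\E[\cE^*]=1+\delta^*$ and $\E[\cR^*]=\delta^*/2$ from \cref{prop:mean}, the expected drift becomes a linear function of $p$ that vanishes exactly at $p=\delta^*/(2+\delta^*)$. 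Standard facts about i.i.d.\ random walks absorbed at $0$ then identify this value as the critical threshold and moreover yield $|\cC^*_\bond|<\infty$ almost surely at $p=p^*_{c,\bond}$.

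The main obstacle is the first step: committing to a precise peeling rule and verifying by direct case analysis that the ``single fresh arc'' invariant is preserved. The subtle point is the asymmetry between the white-revealed and black-revealed cases. When $\chi_{i+1}=1$, the cluster extends through $a_{i+1}$ and all newly exposed edges naturally extend the fresh arc; when $\chi_{i+1}=0$, the peeled face sits on the outside of the interface, so some of its exposed edges lie on the non-cluster side and must be relabeled as inert black edges, while swallowed fresh edges must be pruned from the arc. Fixing a consistent convention for these updates---so that the fresh arc remains a single connected segment and the drift computation is clean---is the delicate piece; once that is done, the drift identity $p(2+\delta^*)=\delta^*$ reads off directly from \cref{prop:mean}.
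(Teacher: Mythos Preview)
Your high-level strategy---explore the interface, reduce to a one-dimensional walk, read off the threshold from the drift---matches the paper, but the concrete algorithm you sketch diverges in a way that keeps the argument from closing.

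The paper does \emph{not} peel at every step, and it does not maintain a single fresh arc. Its invariant is a three-segment boundary free--white--black: a finite run of known-white edges (all with endpoints already in the cluster), black to its right, and free i.i.d.\ edges to its left. The algorithm reveals the colour of the rightmost free edge; if it is white one does nothing further (the white run simply grows by one), and \emph{only if it is black} does one peel the adjacent face. The resulting recursion is
\[
S_n \;=\; \big(S_{n-1} + \epsilon_n - (1-\epsilon_n)\,\cR^*_n\big)^+,
\]
with $S$ the length of the \emph{white} segment, and the drift $p-(1-p)\,\delta^*/2$ vanishes at the claimed $p^*_{c,\bond}$. Note that $\cE^*$ never enters: the newly exposed edges land in the free segment, not the one being counted, so your plan to invoke $\E[\cE^*]=1+\delta^*$ already signals a mismatch.

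Your ``peel always'' rule runs into precisely the trouble you flag at the end, and I do not see a consistent convention that rescues it. If $a$ is revealed white and you then peel, both endpoints of $a$ lie in $\cC$, so the new exposed edges sit with cluster vertices on \emph{both} sides and the single-fresh-arc-with-one-cluster-end picture is lost. Your proposed fix in the black case---relabel some exposed edges as inert black---is not legitimate: those edges have genuinely unrevealed colours and may well be white, so declaring them inert discards real cluster connections. And if one simply tracks the fresh-edge count while peeling every time, the increment $-1+\cE^*-\cL^*$ does not depend on $p$ at all, so no threshold can emerge from that quantity.

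A smaller gap: ``$S_i$ hits $0$ iff $\cC$ is finite'' is too clean here. Because a free segment is always present, $S_n=0$ does not by itself seal off the cluster (the vertex at the free--black junction may still connect to $\cC$ through the interior). The paper handles this by observing that each time $S_n=0$ there is a uniformly positive chance that the very next peel blocks the cluster; hence if $S_n=0$ infinitely often then $|\cC|<\infty$ almost surely.
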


\begin{proof}
  When dealing with bond percolation a new important idea is to keep as
    much randomness as we can.  Thus we do not reveal the status (white or
  black) of all the edges we discover and keep most of them as unknown.
  Instead we only check the color of an edge when necessary to determine if
  this edge is part of $\cC$ or not.  More precisely, we again
  maintain a certain boundary condition on
  $\bM^*_i$.

  One further difference is that we do not reveal a face of the map at
  every step.  Instead, on some steps we only reveal the color of an edge.
  Thus for some $i\geq 1$ we will have that $\bM^*_i=\bM^*_{i-1}$, except
  for differing boundary conditions.
  
  \begin{quote}
    \textsc{Algorithm: } Assume $\bM_i^*$ is a bond-percolated UIHP$*$ with
    a boundary condition of the following form: There is a single finite
    segment of boundary edges that are white (possibly of length $0$).  All
    edges to the right of this segment are black, and all edges to their
    left are unknown and are i.i.d.\ with probability $p$ of being white
    (and also independent of the remaining map and its inside coloring).
    That is we have a free-white-black boundary condition.  We then reveal
    the color of the rightmost unknown edge $a_{i+1}$.  If (and only if) it
    is black, we also perform a peeling step at that edge and reveal a face
    of $\bM^*_i$ \emph{without revealing the status of the new edges
      discovered}.
  \end{quote}
  
  \begin{figure}[h]
    \begin{center}
      \includegraphics[width=16cm]{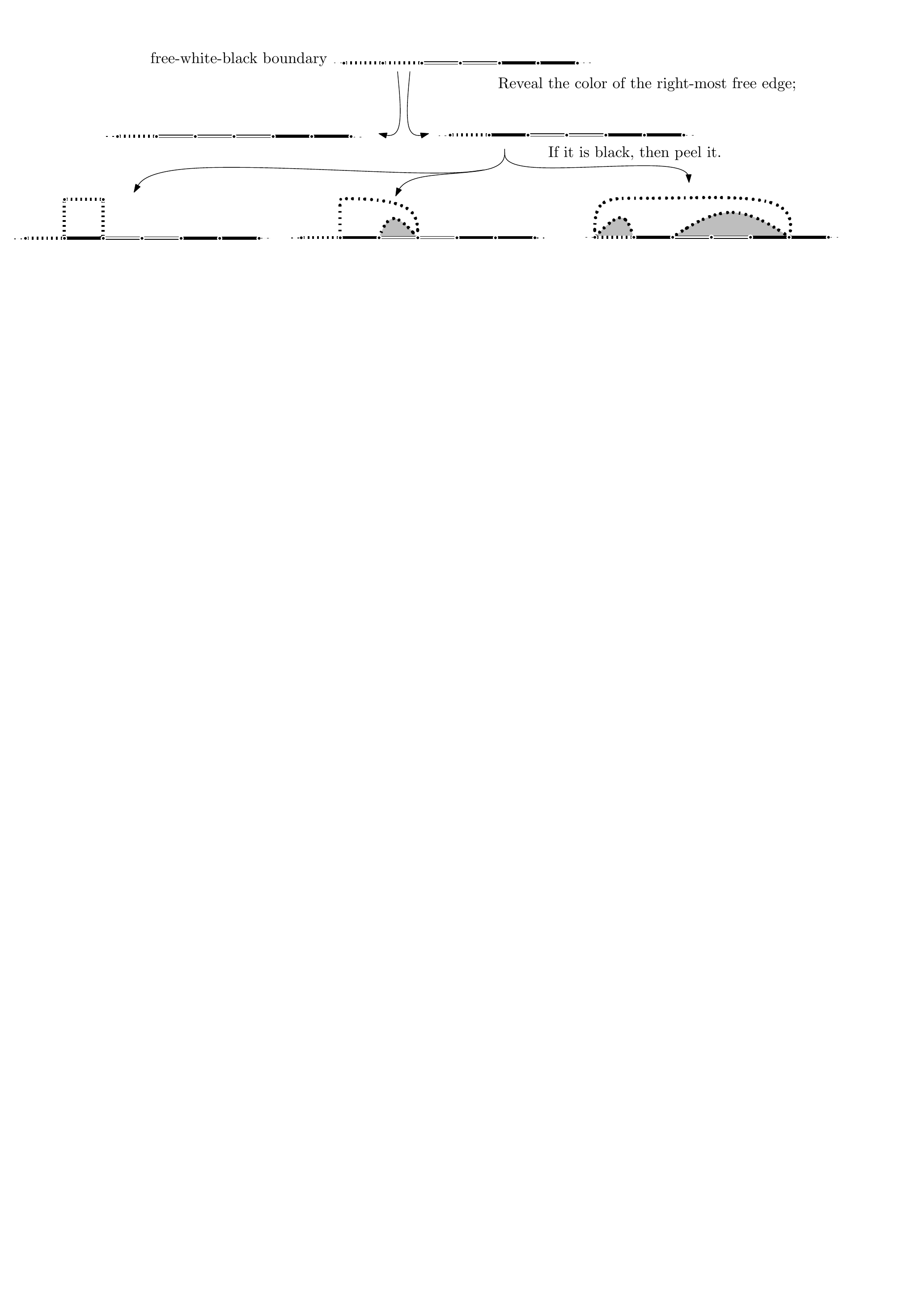}
      \caption{Algorithm for bond percolation. Dotted edges have an unknown
        status.}
      \label{fig:casesbond}
    \end{center}
  \end{figure}
   
  It is again straightforward to check that this form of boundary
  conditions free-white-black is preserved under the peeling process (this
  holds for any class of map) and that the starting boundary condition is
  of the required type (a single origin vertex with $0$ white edges), see
  \cref{fig:casesbond}. We will further assume that this algorithm is used
  from the beginning.
   
  Contrary to the previous cases, there is no clear stopping time for the
  process because there is always a rightmost unknown edge to reveal. In
  fact even if at some peeling time the whole white boundary is eaten there
  is still a possibility that the new vertex at the junction free-black is
  linked to the white cluster, see \cref{fig:stopping}. However as soon as
  $p <1$ it is easy to see that if at time $i\geq 1$ the white boundary is
  of length $0$ then there is a strictly positive probability that at time
  $i+1$ the rightmost unknown edge turns out to be black and the revealed
  face blocks the white cluster that is, even the new junction free-black
  vertex is not part of the white origin cluster. See
  \cref{fig:stopping}. In this case the white cluster of the origin is
  finite.

  \begin{figure}[h]
    \begin{center}
      \includegraphics[width=12cm]{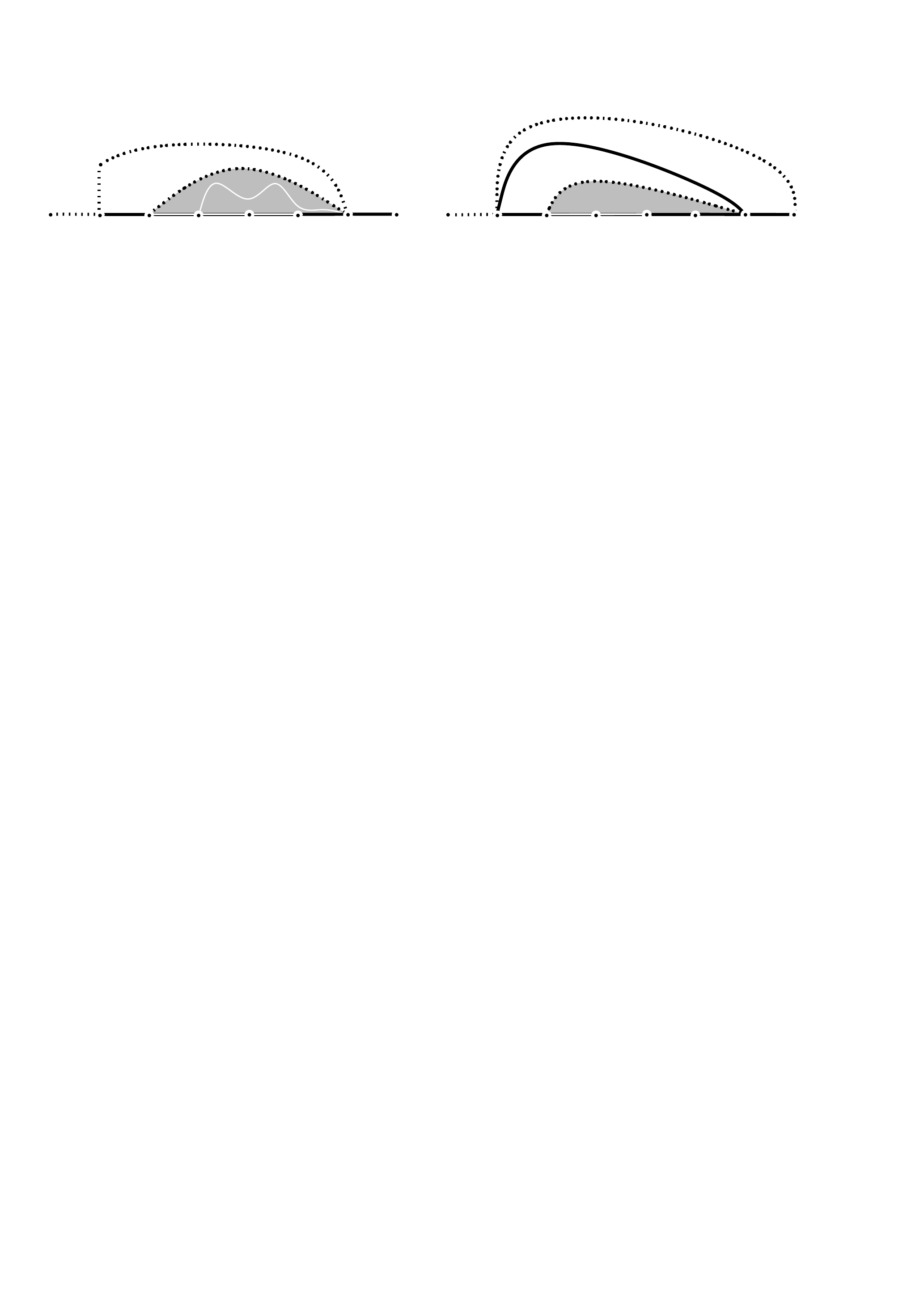}
      \caption{On the left the entire white boundary is swallwed, but it
        is still possible for the white cluster to connect to infinity as
        shown. With positive probability, the next step rules out such a
        connection, as on the right.}
      \label{fig:stopping}
    \end{center}
  \end{figure}

  Let $S_n$ be the number of edges in the white boundary segment of
  $\bM^*_n$, so that initially $S_0=0$.  Let $\epsilon_n$ be the indicator
  of the event that the edge tested in step $n$ is white, and let $\cR^*_n$
  be the number of edges swallowed to the right of $a_n$.  Note that if
  $a_n$ is white then no face is revealed and by convention we let
  $\cR^*_n=0$ in this case.  We do not need $\cE^*_n$ in this model.  Then
  we find that $S_n$ satisfies
  \[
  S_{n} = \Big(S_{n-1} + \epsilon_n - (1-\epsilon_n)\cR^*_n \Big)^+,
  \]
  and is defined for all $n \geq 0$.  According to the previous
  considerations if $S_{n}=0$ infinitely often then $|\cC|<\infty$ almost
  surely. On the other hand if $S_{n}>0$ for all but finitely many $n\geq
  0$, then there is a positive probability that $|\cC| = \infty$ a.s.  The
  expected increment of $S_n$ is $\E[\epsilon_n - (1-\epsilon_n)\cR^*_n] =
  p-(1-p)\delta^*/2$ which is positive precisely when
  $p>\delta^*/(2+\delta^*)$.  The theorem follows.
\end{proof}

\textsc{Interface.} It is useful to consider simultaneously the percolation
configuration on the lattice and the dual percolation configuration on the
dual lattice.  Since each edge of the lattice corresponds to a dual edge in
the dual lattice, the randomness is the same.  We will use the same colors
for an edge and a dual edge, so we study white primary clusters and black
dual clusters.  The above exploration process just follows the (leftmost)
interface of the cluster $\cC$, that is the interface that separates the
cluster of the root vertex from the dual black cluster of the face
containing the edge to its left.  As in \cref{sec:dual_site} not every face
visited by the interface corresponds to a step of the peeling process and
some parts of the interface lie in enclosed maps.  See
\cref{fig:explobondprimal}.

\begin{figure}[!h]
  \begin{center}
    \includegraphics[width=9cm]{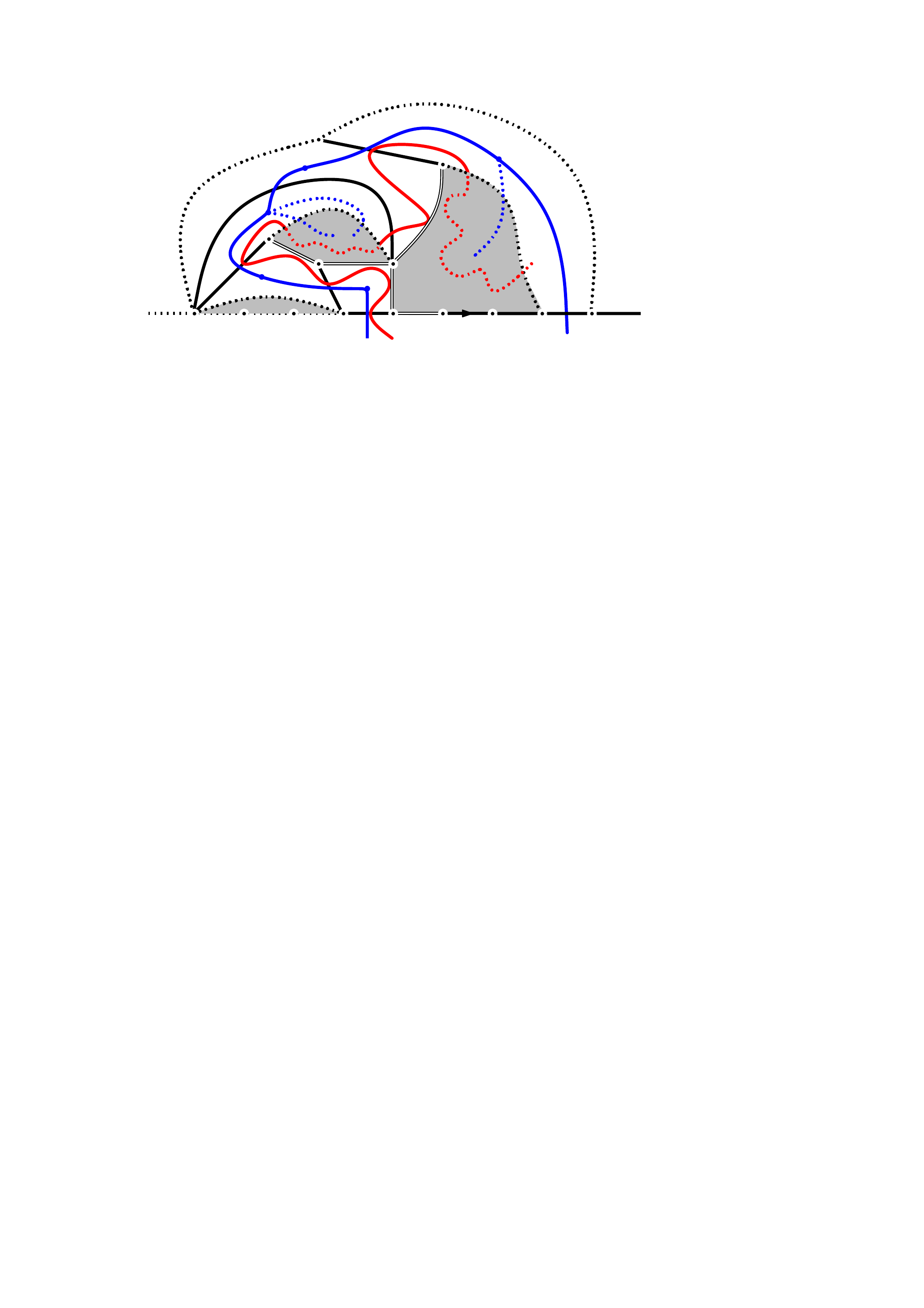}
    \caption{Interface (in red) in bond percolation between the white
      cluster of the origin and the dual black cluster (dual edges in
      blue).}
    \label{fig:explobondprimal}
  \end{center}
\end{figure}

\subsection{Dual percolations} \label{sec:dual_bond}

In this section we study the ``dual'' of the percolations studied in the
last sections.  By dual percolations we mean that if the origin cluster is
blocked this is because there is a ``dual'' cluster in the dual percolation
preventing it from going further.  In particular we shall prove the
unsurprising result that the corresponding thresholds equal $1$ minus the
initial ones.

Since site percolation on triangulations is self-dual this case is already
solved.  As we already noticed, the dual percolation of bond percolation on
the UIHP$*$ is bond percolation on the dual of the lattice.  This process
is studied in \cref{subsec:dualbond}. In \cref{subsec:dualsitedual} we
sketch the analysis of the dual of face percolation which is given by site
percolation on the ``star'' lattice associated to $\bM^*$.

\subsubsection{Dual bond = bond dual} \label{subsec:dualbond}

Here we establish the unsurprising result $p^*_{c,\bond} + p^*_{c,\bond'} =
1$.  While the arguments is very similar to the one used in the previous
section, we describe it here as an illustration of how going from the
primal to the dual lattice and following the same interfaces yields
slightly different exploration procedure.

For sake of clarity we stay with the primal lattice $\bM^*$ but explore the
dual cluster of the origin.  We will first get the result with a slightly
different boundary condition: All the edges of the boundary of the primal
but the root edge are black.  The root edge and all edges not on the
boundary of $\bM^*$ are independent and randomly colored, white with
probability $p\in(0,1)$ and black with probability $1-p$.  Recall that the
color of a dual edge is that of its corresponding primal edge.  We are now
interested in $\cC=\cC^*_{\bond'}$ the dual white bond percolation cluster
containing the dual of the root edge, which by convention is empty if the
root edge is black.

\begin{theorem}\label{thm:bonddual}
  We have $\P(|\cC^*_{\bond'}| = \infty) > 0$ if and only if $p >
  p^*_{c,\bond'}$ where
  \[
  p^*_{c,\bond'} = \frac{2}{2+\delta^*}.
  \]
\end{theorem}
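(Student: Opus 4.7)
The plan is to mimic the peeling exploration of \cref{thm:bond}, keeping track of the dual white cluster $\cC^*_{\bond'}$ of the root edge instead of the primal one. Compared with bond percolation, two things change: the initial boundary condition of $\bM^*$ is all black except the root edge, and the roles of white and black are interchanged, because the dual cluster extends through a white edge rather than being bounded by one. If the root edge is black, an event of probability $1-p$, then by convention $\cC^*_{\bond'}=\varnothing$, so I would assume henceforth that the root is white and begin by peeling the root edge. This reveals the face $f_1\in \cC^*_{\bond'}$ just inside the root edge and exposes $\cE^*_1$ fresh edges on $\partial \bM^*_1$; as in the proof of \cref{thm:bond}, I would keep the randomness alive by not inspecting the colors of these new edges yet, and will call such boundary edges \emph{free}.

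At each subsequent step $i\ge 2$, the boundary of $\bM^*_{i-1}$ has the form (known black)--(contiguous free segment of length $S_{i-1}$)--(known black), and I would reveal the color of the leftmost free edge $a_i$. If $a_i$ is white then the face across $a_i$ in $\bM^*_{i-1}$ belongs to $\cC^*_{\bond'}$ and I would peel at $a_i$, which removes $a_i$, swallows some of its neighbors, and exposes $\cE^*_i$ new free edges; if $a_i$ is black I would simply mark it as known-black and move on without peeling. The choice of peeling edge depends only on $P^*_{i-1}$, so \cref{prop:vraipeeling} applies: the sequence $(\cE^*_i,\cR^*_i)_{i\ge 1}$ of peeling outcomes (with the convention $\cE^*_i=\cR^*_i=0$ when no peeling happens at step $i$) is i.i.d.\ and independent of the color indicators $\epsilon_i=\mathbf{1}_{\{a_i\text{ is white}\}}$, themselves i.i.d.\ Bernoulli$(p)$. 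A planar topological argument identical in spirit to those of the preceding theorems shows that $|\cC^*_{\bond'}|<\infty$ if and only if $S_n=0$ for some $n$: each peeling step contributes one new face to $\cC^*_{\bond'}$, and the exploration can terminate only when every boundary edge of the current $\bM^*_n$ is known black, which blocks the cluster from reaching any further face.

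As long as $S_{n-1}$ is large enough for right-swallowing not to overflow the free segment, the state obeys the recursion
\[
S_n = S_{n-1} + \epsilon_n(\cE^*_n-\cR^*_n) - 1,
\]
so by \cref{prop:mean} the bulk drift equals $p(\E[\cE^*]-\E[\cR^*])-1 = p(1+\delta^*/2)-1$, which is positive iff $p > 2/(2+\delta^*)$. The standard dichotomy for random walks with i.i.d.\ increments then concludes: if the drift is non-positive the walk hits $0$ almost surely and $|\cC^*_{\bond'}|<\infty$ a.s., whereas if the drift is positive the walk escapes to $+\infty$ with positive probability, and hence so does $|\cC^*_{\bond'}|$; this identifies $p^*_{c,\bond'}=2/(2+\delta^*)$ and, as a sanity check, matches the duality identity $p^*_{c,\bond}+p^*_{c,\bond'}=1$. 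The main technical obstacle I foresee is controlling what happens when $S_n$ is small, because then a right-swallow may overflow into the known-black region and the exact increment formula above breaks down; however, the resulting corrections are bounded uniformly and do not affect the sign of the drift at infinity, so they cannot alter the critical threshold, just as in the corresponding subtlety in \cref{thm:bond,thm:sitedual}.
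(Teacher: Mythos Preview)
Your proposal is correct and follows essentially the same route as the paper: the same black--free--black boundary condition, the same rule (inspect the leftmost unknown edge, peel only if white without revealing the new edges' colors), the same recursion and the same drift computation $p(1+\delta^*/2)-1$. The only cosmetic differences are that the paper writes the exact recursion $S_n = (S_{n-1}-1-\epsilon_n\cR^*_n)^+ + \epsilon_n\cE^*_n$ rather than deferring the small-$S$ correction to a remark, and that with your convention $\cE^*_i=\cR^*_i=0$ on non-peeling steps the pair $(\cE^*_i,\cR^*_i)$ is not literally independent of $\epsilon_i$ --- but the increments $\epsilon_n(\cE^*_n-\cR^*_n)-1$ are still i.i.d., which is all you use.
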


\begin{proof}
 
  As for bond
  percolation, the color of some edges will remain unknown.  At each step
  we shall reveal the color of one edge whose status is still unknown, and
  possibly reveal additionally a face of the map.  The preserved boundary
  condition is now black-free-black.
    
  \begin{quote}
    \textsc{Algorithm:} Assume $\bM_i^*$ is a bond-percolated UIHP$*$ with
    a boundary condition of the following form: All the edges are black
    except a finite connected white region of finitely many edges whose
    unknown colors are i.i.d.\ white with probability $p$ and black with
    probability $1-p$ (and also independent of the unknown region).  We
    then discover the color of the leftmost edge with unknown color. If it
    is black we do nothing more.  If it is white, we also discover the face
    in $\bM^*_i$ adjacent to it but do not reveal the status of the new
    edges.
  \end{quote}
  Clearly $\bM_0^*$ is of the above form and as usual we see that the
  boundary condition black-free-black is preserved under the peeling
  process (see \cref{fig:preserved}), at least as long as unknown edges are
  present on the boundary.  Again, this is valid without any restriction on
  the type of maps considered.
  
  \begin{figure}[!h]
    \begin{center}
      \includegraphics[width=10cm]{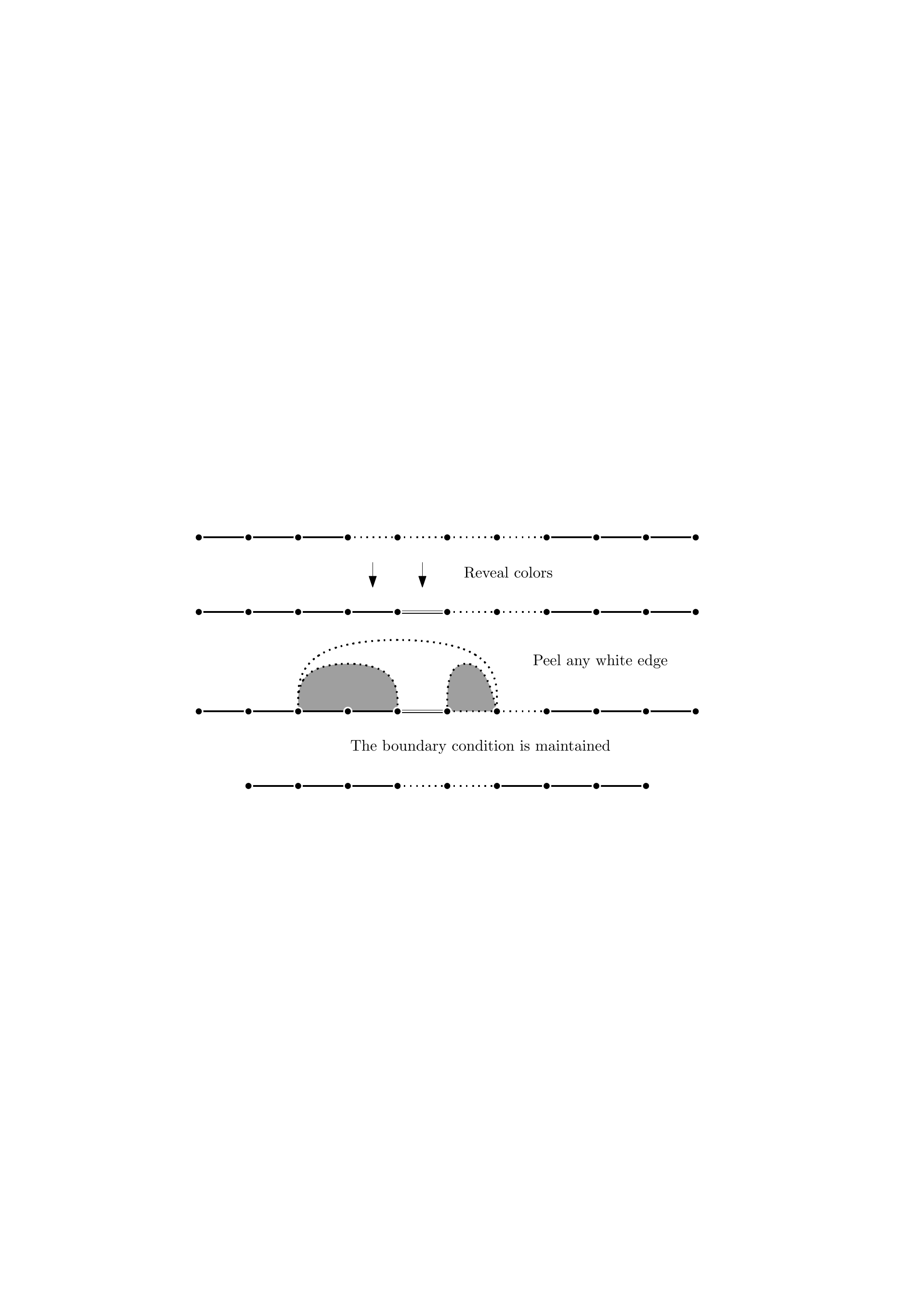}
      \caption{The operations needed to peel an edge.}
      \label{fig:preserved}
    \end{center}
  \end{figure}
 
  If the process is used from the beginning up to time $i>0$, then one can
  check that every edge of unknown color on $\partial\bM^*_i$ that turns out
  to be white is connected in the dual of $P_i^*$ to the dual root edge (if
  it is white).  The process stops when there are no more unknown edges at
  some stage and as usual, a planar topological argument shows that this
  happens precisely when the dual white cluster of the origin edge is
  finite.
 
  Let us denote by $S_n$ the number of edges of unknown color on the
  boundary after $n$ steps of the peeling process.  Let $\epsilon_n$ be the
  indicator of the event that the edge inspected at step $n$ is white, and
  let $\cR^*_n,\cE^*_n$ denote the swallowed and exposed edges as before,
  with the convention that both are $0$ if no face is revealed in the $n$th
  step. Clearly $S_0=1$ and as long as $S_{n-1}>0$ we have
  \[
  S_{n} = \Big(S_{n-1} - 1 - \epsilon_n\cR^*_n \Big)^+ + \epsilon_n \cE^*_n.
  \]
  
  \begin{figure}[!h]
    \begin{center}
      \includegraphics[width=16cm]{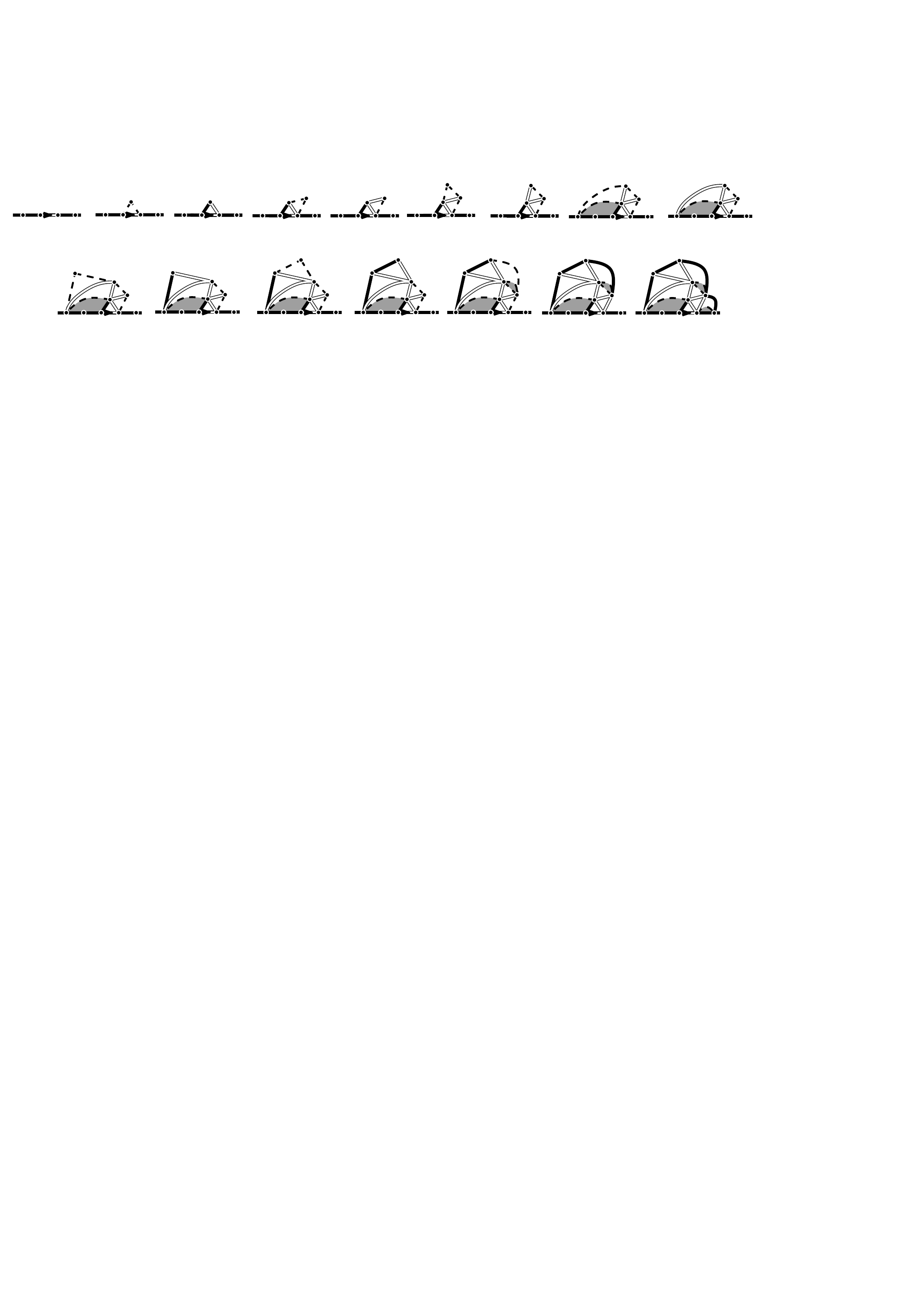}
      \caption{Exploration of dual bond percolation. The
        dotted edges are undetermined at the considered time.}
    \end{center}
  \end{figure}

  As before $(S_n)$ is killed and set to $0$ when reaching $\Z^-$.  Hence
  $(S_n)$ is not exactly a random walk absorbed at $\Z^-$ but the
  increments are independent as long as $S_n$ is large.  In particular we
  see that $S_n$ has a positive probability of remaining positive if and
  only if
  \[
  0 < \E [\epsilon_n (\cE^*_n-\cR^*_n) - 1] = p (1+\delta^*/2) - 1.
  \qedhere
  \]
\end{proof}

\subsubsection{Dual face} \label{subsec:dualsitedual}

Face percolation is not self-dual.  If two faces have only a common vertex
but no common edge, they need not be part of a single connected white
cluster, but if two such faces are black they do form a local barrier for
connection of white faces.  Hence the dual percolation of face percolation
is face percolation but where two faces are declared adjacent if they share
a vertex. Equivalently it corresponds to site percolation on the dual
lattice where we add connections between sites whose dual faces share a
vertex.  This is known as the star-lattice in the case of $\Z^d$. We call it
face$'$ percolation in the sequel. \medskip

We have the expected result 
\[
p_{c, \face'} ^*= 1- p^*_{c,\face}= \frac{\delta^*}{2\delta^*+2}.
\]

Since the reader has already seen several versions of this argument, we
only sketch the proof: The preserved boundary condition is now
black--white--black (that is edges are adjacent to exterior 
faces of those colors). The white part may be empty, in which case we
consider it to include a single vertex.  The peeling rule is to peel at
the edge just to the left of the white revealed part.  The corresponding
recursion for the length of the white boundary is
\[
S_{n} = \Big(S_{n-1} - \cR^*_n\Big)^+ + \epsilon_n\cE^*_n.
\]
As in \cref{sec:bond} even if the white part is swallowed in the process,
it could be that the vertex at the junction enables the origin cluster to
grow further. The problem is treated similarly and as long as $\E[ \epsilon
\cE^* - \cR^*] \leq 0$ we have $S_{n}=0$ infinitely often and then $|\cC| <
\infty$ a.s.  Otherwise $|\cC| = \infty$ with positive probability.  We
leave the details to the interested reader.

\subsection{Free boundary conditions, universality}

In \cref{thm:site,thm:sitedual,thm:bond,thm:bonddual} we focused on the
cluster of a single boundary point (or edge or face) with specially chosen
boundary conditions: black for site percolation on triangular lattice,
black for face, face$'$ and dual bond percolation and free--black for bond
percolation. Note that black boundary condition is the natural setting for
studying the white cluster of the origin in face percolation because of the
presence of the infinite root face which cannot be part of $\cC$
(otherwise, $\cC$ is trivially infinite).  The same remark holds for dual
bond and face$'$ percolation.  However, one can wonder whether free
boundary condition (that is all edges or vertices have i.i.d.\ colors)
changes the percolation threshold in \cref{thm:site,thm:bond}.  The answer
is no: 

\begin{proposition}\label{prop:quenched}
  The percolation thresholds $p^*_{c,\site}$ and $p^*_{c,bond}$ identified
  in \cref{thm:site,thm:bond} correspond to the a.s.\ thresholds for
  percolation on the corresponding percolations on the half-planar maps
  with free boundary conditions.
\end{proposition}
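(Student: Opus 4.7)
The proposition states that the thresholds identified in \cref{thm:site,thm:bond} under mostly-black (or free-white-black for bond) boundary conditions coincide with the a.s.\ thresholds under truly free boundary conditions. I would prove the two directions separately, exploiting the fact that the pair $(\bM^*, \omega_{\text{free}})$ is invariant and ergodic (in fact mixing) under translations along the boundary, so that the event ``there exists an infinite white cluster'' is a $0/1$ event.

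For the supercritical direction $p>p_c^*$, I would fix a boundary vertex (or edge, in the bond case) $v_0$ and condition on it being white. The monotone coupling that re-colors all other boundary elements black leaves the interior randomness intact and can only shrink the white cluster of $v_0$; the resulting configuration is exactly the mostly-black setup of \cref{thm:site} or \cref{thm:bond}, where at $p>p_c^*$ the cluster is infinite with positive probability. Hence under free boundary, $v_0$ lies in an infinite cluster with probability at least $p\cdot \P_{\text{mostly black}}(|\cC|=\infty)>0$, which the $0/1$ law upgrades to probability $1$.

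For the subcritical direction $p\le p_c^*$, I would run the analogue of the peeling exploration of \cref{thm:site} (resp.\ \cref{thm:bond}) on the free-boundary map, starting from an arbitrary boundary vertex or edge $v_0$ conditioned white, revealing each boundary color only at the moment it is required by the exploration. Because the free coloring is i.i.d.\ and independent of $\bM^*$, the peeling statistics $(\cE_n^*,\cR_n^*)$ remain i.i.d.\ as in \cref{prop:vraipeeling} and every freshly consulted color is an independent Bernoulli$(p)$. A direct check, identical in spirit to the one in the proofs of \cref{thm:site,thm:bond}, shows that the mean increment of the length $S_n$ of the white boundary segment containing the cluster agrees with the drift found there and is non-positive at $p\le p_c^*$. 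Thus $S_n\ge 0$ returns to $0$ almost surely, and at each return the same ``next step blocks the cluster'' argument as in \cref{thm:bond} provides a uniformly positive probability of terminating the exploration with a finite cluster. Translation invariance extends this to every boundary vertex, and an analogous peeling started at an interior face combined with ergodicity rules out infinite clusters that avoid the boundary.

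The delicate point is the subcritical direction. Under free boundary a ``jump to the right'' may swallow past the current white segment, uncovering a fresh boundary vertex whose color can reignite the exploration and temporarily enlarge $S_n$ beyond its pre-jump value. The main things to verify carefully are that this absorption does not alter the expected drift of $S_n$ (which it does not, thanks to the independence of the free coloring from $\bM^*$) and that the ``each visit to zero has a uniformly positive probability of finally terminating the cluster'' step from \cref{thm:bond} carries over to this enriched setup.
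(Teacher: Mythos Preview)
Your proposal is correct and follows essentially the same approach as the paper: ergodicity for the $0/1$ law, a monotonicity argument for the supercritical direction, and the exploration-with-repeated-blocking-attempts argument for the subcritical direction. The paper adds one small simplification you may find useful: for bond percolation it first reveals boundary edges to the right of the root until a black one is found, so the exploration can be run with the exact free--white--black boundary condition of \cref{thm:bond} rather than having to separately control overshoots into fresh free boundary on the right.
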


\begin{proof}
  Let us focus on the case of bond percolation with free boundary
  condition. Imagine that we reveal the right neighbors of the root edge
  until we find a black edge. At this point we have a
  free--white--black--free boundary.  We then run our exploration process
  at the free--white junction.  If $p \leq p_{c,\bond}^*$ then
  there will be some time where the white boundary is completely
  swallowed.  At this stage, as in the proof of \cref{thm:bond}, there is a
  positive probability that the next stage of the exploration totally
  blocks the cluster of the origin which is thus finite.  If it is not the
  case we just continue the exploration process at the junction.  We
  eventually end-up with a blocking situation.  Clearly if for every vertex
  $u$ of the boundary the probability that $u$ is in an infinite white
  cluster is $0$ then there is no percolation on the full-map. If $p >
  p_{c,\bond}^*$ there is a positive probability that the cluster
  of the origin is infinite.  By ergodicity of the half planar maps w.r.t.\
  the translation operator (that preserves the map $\bM^*$ but shifts the
  root edge along the boundary) we deduce that in this case there is
  percolation on the map.
  
  The case of site percolation involves similar ideas, although proving
  that there is no percolation at critically is a bit more tedious. We
  safely leave the details to the interested reader.
\end{proof}


\paragraph{Universality.}

Whereas the exploration of site percolation of \cref{thm:site} is specific
to the triangulation case, we have already indicated in the proofs and at
the beginning of \cref{sec:threshold} that the methods developed for bond
and face percolations can be applied to any kind of planar maps without
restriction on the shape of a face.

In \cite{ACpercopeel2} we will prove that the percolation thresholds
identified in this work also correspond to percolation thresholds for
the corresponding models \emph{on the full-plane} UIP$*$.


\section{Critical and off-critical percolation exponents}
\label{sec:exponents}

We now show how peeling along percolation interfaces allows us to deduce
certain geometrical properties of the percolation clusters and in
particular compute certain critical exponents.  For sake of simplicity we
focus on the simplest case which is  site percolation on the triangular
lattice $\two$.  Thus we  fix $*=\two$, and omit it from notation.
We shall comment in \cref{sec:modifications} on the adaptations needed for  
our proofs to cover more general cases.

Recall the setting of \cref{sec:siteperco}: Let $\bM$ be a half-plane UIPT
endowed with Bernoulli site percolation of parameter $p\in(0,1)$ (for
white) with boundary condition given by the infinite boundary being black
with the exception of the root vertex which is white.  \cref{thm:site}
states that the probability that the cluster $\cC:=\cC^*_\site$ containing
the only white vertex of the boundary is infinite is positive if and only
if $p>1/2$.  More precisely, we have from \eqref{eq:explositeperco} that
the length of the white boundary during the exploration process evolves as
a random walk with i.i.d.\ increments of law
\[
\xi^{(p)}_{i} := \epsilon_{i}(\cE_{i}-1) - \cR_{i},
\]
where the joint law of $\cE,\cR$ is given by \cref{prop:mean} and
$\epsilon$ is an independent Bernoulli variable of parameter $p$.  The
process starts at $S_0=1$ and is killed at the first entrance of $\Z^-$.
In particular $\xi^{(p)}$ takes its values in $\{\dots,-2,-1,0,1\}$ and 
satisfies for $k>0$
\[
\P\big(\xi^{(p)} =-k\big) = q_k^\two = \frac{(2k-2)!}{4^k (k-1)!(k+1)!}
\sim \frac{1}{4 \sqrt{\pi}} k^{-5/2}, \qquad \text{as } k \to \infty.
\]

When $p = p_c = 1/2$ the increments $\xi^{(p_{c})}$ have mean $0$.  In this
case we simply write $\xi$ for $\xi^{(p_{c})}$.  Since the r.v.\,$\xi$ is
in the domain of attraction of a stable random variable, the associated
(unkilled) random walk converges once renormalized towards a stable process
of parameter $\frac{3}{2}$.  Let us be a bit more precise and recall some
background about the spectrally negative $\frac{3}{2}$-stable process and
its discrete version.  The interested reader should consult \cite{Ber06}
and the references therein for more details.

We slightly abuse notation here and consider the walk $S$ not killed at the
first entrance of $\Z^-$ that is, let $S_0=1$ and $S_n = 1+\xi_1+ \xi_2 +
\dots + \xi_n$ be a random walk with i.i.d.\ increments of law $\xi$ then
we have
\[
\left(\frac{ S_{\lfloor nt \rfloor }}{n^{2/3}} \right)_{t\geq 0}
\xrightarrow[n\to\infty]{(d)} \kappa \cdot \Big(\cS_t\Big)_{t\geq 0},
\]
where $(\cS_t)$ is the standard $\frac{3}{2}$-stable process with no drift
and no positive jumps with $\kappa = 3^{-2/3}$ and the convergence holds in
distribution for the Skorokhod topology.

By standard spectrally negative $\frac{3}{2}$-stable process, we mean that
its Laplace transform is given by $\E [e^{\lambda\cS_t}] =
e^{t\lambda^{3/2}}$ for all $\lambda \geq 0$, equivalently its Levy measure
is given by
\[
\Pi(dx) = \frac{3}{4 \sqrt{\pi}} |x|^{-5/2} dx \mathbf{1}_{x <0}.
\]
This process is also known as the Airy-stable process and $\cS_1$ as the
(map-)Airy distribution.  Note that the Airy distribution is not symmetric,
has stretched exponential tail on the right but power law tail on the left,
see \cref{fig:airy}.  This process enjoys the scaling property with
parameter $3/2$ that is $(\cS_t)_{t\geq0} = (\lambda ^{-2/3} \cS_{\lambda
  t})_{t\geq0}$ in distribution for any $\lambda>0$.  By the scaling
property, the positivity probability $\P(\cS_t \geq 0)$ is independent of
$t\geq0$ and equals
\[
\rho = \P(\cS_t \geq 0) = \frac{2}{3}.
\]
This quantity is of great importance since it rules the behavior of many
distributional properties of $(\cS_t)_{t\geq0}$ and its discrete analog
$(S_n)_{n\geq0}$.

\begin{figure}[!h]
  \begin{center}
    \includegraphics[width=14cm]{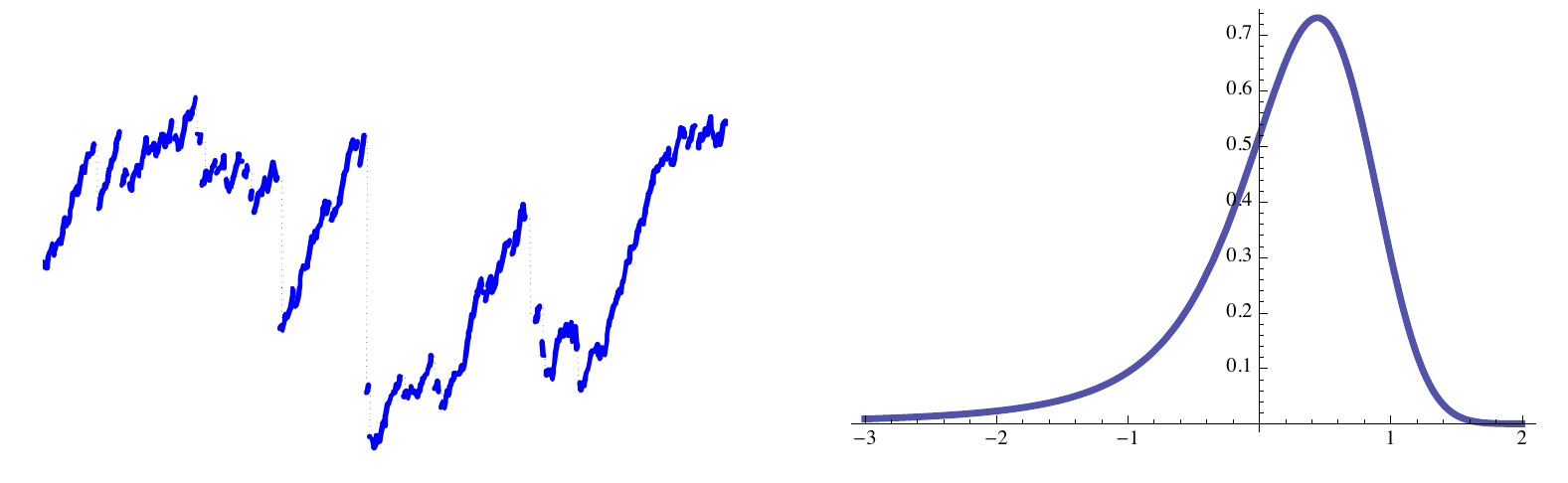}
    \caption{ A sample of an Airy-stable process and the density of its
      distribution at time $t=1$ (also known as the map-Airy
      distribution).}
    \label{fig:airy}
  \end{center}
\end{figure}
\subsection{The off-critical percolation probability}

A first application is the exact computation in $ \mathbf{M}$ of the
probability that the cluster $\cC$ is infinite.

\begin{theorem}\label{thm:exacttheta}
  For site percolation on $\bM^\two$ with black boundary conditions
  except for a white root vertex, we have
  \[
  \P_p(|\cC| = \infty) = \frac{p-\frac{1}{2}}{p} \mathbf{1}_{p > 1/2}.
  \]
\end{theorem}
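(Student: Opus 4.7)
The plan is to translate the claim into a hitting-time statement about the exploration walk $S_n$ of \eqref{eq:explositeperco}, and then invoke the cycle lemma for skip-free upward random walks. Recall from the proof of \cref{thm:site} that $\cC$ is infinite exactly when $S_n \geq 1$ for all $n \geq 0$, starting from $S_0 = 1$, and $S_n$ has i.i.d.\ increments $\xi^{(p)} = \epsilon(\cE - 1) - \cR$ with mean $\delta^{\two}(p - 1/2)$. For $p \leq 1/2$ this mean is non-positive and $\xi^{(p)}$ is integer valued with $\E|\xi^{(p)}| < \infty$ (thanks to the $k^{-5/2}$ tail of $\cR$), so the walk almost surely visits $\Z_{\leq 0}$ and the stated value $0$ follows. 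From now on I assume $p > 1/2$.

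The decisive structural observation is that $\xi^{(p)} \leq 1$ almost surely (since $\cE \in \{1,2\}$, $\cR \geq 0$ and $\epsilon \in \{0,1\}$), so the walk is spectrally negative, and the atom at $+1$ is explicit:
\[
\P\bigl(\xi^{(p)} = 1\bigr) = \P(\epsilon = 1,\,\cE = 2) = p\cdot q_{-1}^{\two} = p\,\delta^{\two}.
\]
For integer-valued i.i.d.\ sums with steps bounded above by $1$ the classical cycle lemma (equivalently Kemperman's formula, or the Dwass--Takacs ballot identity) yields the exact identity
\[
\P_{0}\bigl(S_j \geq 1 \text{ for all } 1\leq j \leq n\bigr) = \frac{\E[S_n^+]}{n}.
\]

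I then pass to the limit $n \to \infty$. The left-hand side is a decreasing sequence of probabilities whose limit is $\P_{0}(S_j \geq 1\,\forall j \geq 1)$. The $L^1$ strong law (valid since $\E|\xi^{(p)}| < \infty$) gives $S_n/n \to \E[\xi^{(p)}]$ in $L^1$, so $\{|S_n|/n\}$ is uniformly integrable; combined with the almost sure convergence $S_n^+/n \to \E[\xi^{(p)}]$ (a consequence of the strictly positive drift), this yields $\E[S_n^+]/n \to \E[\xi^{(p)}]$.

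Finally, the skip-free upward property forces the first step from $0$ that keeps the walk positive to equal $+1$, so by the strong Markov property
\[
\P_{0}\bigl(S_j \geq 1\,\forall j \geq 1\bigr) = \P\bigl(\xi^{(p)} = 1\bigr) \cdot \P_{1}\bigl(S_j \geq 1\,\forall j \geq 0\bigr) = \P\bigl(\xi^{(p)} = 1\bigr)\cdot\P_p(|\cC| = \infty).
\]
Dividing the two preceding identities, I obtain $\P_p(|\cC| = \infty) = \E[\xi^{(p)}]/\P(\xi^{(p)} = 1) = \delta^{\two}(p - 1/2)/(p\,\delta^{\two}) = (p - 1/2)/p$, as desired. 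The only delicate point in this outline is the interchange of limit and expectation in the ballot identity, but the uniform integrability provided by the explicit polynomial tail of $\xi^{(p)}$ makes it routine.
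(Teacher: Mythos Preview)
Your proof is correct and follows essentially the same route as the paper's: both exploit the skip-free upward property $\xi^{(p)}\le 1$ to reduce the question to a ballot-type identity, then factor $\P_0(S_j\ge 1\ \forall j\ge 1)=\P(\xi^{(p)}=1)\cdot\P_1(S_j\ge 1\ \forall j\ge 0)$ and divide. The only cosmetic difference is that the paper directly cites the ballot theorem (Addario-Berry--Reed) for the identity $\P(\xi_0+\cdots+\xi_n>0\ \forall n)=\E[\xi^{(p)}]$, whereas you derive it from the finite-$n$ cycle lemma $\P_0(S_j\ge 1,\ 1\le j\le n)=\E[S_n^+]/n$ together with an $L^1$/uniform-integrability passage to the limit; this is exactly the standard proof of the result the paper quotes.
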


In particular this implies that the off-critical exponent for the
percolation probability is $1$ (which means $ \mathbb{P}_{p-p_{c}}( |
\mathcal{C}|=\infty) = (p-p_{c})^{1+o(1)}$ for $p \downarrow p_{c}$).  Note
that if the root vertex is instead taken to be random, the percolation
probability becomes simply $\big(p-\frac12\big)^+$.

It is perhaps surprising that the probability of percolation tends to $1/2$
as $p\to1$.  This comes from the fact that even if $p=1$ (that is, every new
discovered point is white) there is still a non-zero probability that
the first step  (or first few steps) of peeling swallows the white boundary
by making a connection towards the right and swallowing the white cluster.

\begin{proof}
  By \eqref{eq:explositeperco} in the proof of \cref{thm:site}, the
  probability that the cluster of the single white point on the boundary of
  $\bM^\two$ is infinite is equal to the probability that a random walk
  starting from $1$ and with i.i.d.\ increments distributed as $\xi^{(p)}$
  never hits $\Z^-$.  In our very specific case this probability can be
  evaluated exactly.  Indeed, if $\xi_i^{(p)}$ are i.i.d. copies of
  $\xi^{(p)}$ we have
  \begin{align*}
    \P_p(1 + \xi^{(p)}_1+ \dots + \xi_n^{(p)} > 0, \forall n \geq 1)
    &= \P( \xi^{(p)}_0 + \dots + \xi_n^{(p)} >0, \forall n \geq 1 \mid
    \xi_0^{(p)} =1) \\
    &= \frac{\P( \xi^{(p)}_0 + \dots + \xi_n^{(p)} >0, \forall n \geq 1)}
    {\P(\xi_0^{(p)}=1)}.
  \end{align*} 
  We have that $\P(\xi^{(p)}=1) = \P(\cE=2,\epsilon=1) = \frac23 p$.  Since
  $\xi^{(p)} \leq 1$, the ballot theorem (see e.g.\ \cite[Theorem
  3]{ABR08}) implies that $\P(\xi^{(p)}_0 + \dots + \xi_n^{(p)} > 0,
  \forall n \geq 1)$ is equal to the mean of $\xi^{(p)}$ which is
  $\frac{2p-1}{3}$.  This finishes the proof of the theorem.
\end{proof}

\subsection{Critical exponents}

We now turn to the study of the geometry of the cluster $\cC$ for site
percolation on $\bM^\two$ at the critical point $p_c=1/2$.  Recall that the
cluster of the only white vertex of the boundary is almost surely finite at
$p_c$.  We denote by $\cH$ the {\bf hull} of $\cC$ that is the sub map of
$\bM$ obtained by filling-in all the finite holes of $\cC$.  This hull has
a connected boundary made of white vertices which we denote by $\partial
\cH$ (unlike the boundary of $\cC$ which may have any number of connected
components).  We also consider the extended hull $\cH^1$ of the cluster
which is made of the hull of all triangles adjacent to $\cC$ (see
\cref{fig:hulls}) or equivalently of the hull of the triangles discovered
during the exploration process (see \cref{fig:perimeter} below). This has
the effect of adding to $\cH$ any so-called fjords, or parts of the
complement connected to infinity only through a single vertex.

We are interested in the boundary of $\cH^1$ w.r.t.\ $\bM^*$, that is the
number $|\partial \cH^1|$ of vertices in $\cH^1$ adjacent to a vertex
outside $\cH^1$.  Note that part of the perimeter of $\cH^1$
coincides with the boundary of $\bM^*$.  That part of the perimeter is not
included in our estimates. Considering the full boundary of 
$\cH^1$ would not change the critical exponents but would require some
additional arguments.  In fact, this part of the perimeter has
the same scale as the rest of the perimeter.  

\bigskip

Before going into the proof of \cref{thm:criticexpo} let us recall a few
facts on random walks in the domain of attraction of a spectrally negative
$\frac{3}{2}$-stable L\'evy process.  Remember that the length of the white
boundary of $\bM_i$ in the exploration of the cluster of the origin evolves
as a random walk $(S_i)_{i\geq 0}$ started from $S_0=1$ and with
independent increments $\xi_{i}= \epsilon_{i}( \cE_{i}-1) - \cR_{i}$ where
the Bernoulli variables $\epsilon_{i}$ have parameter $p_{c}=1/2$. In
particular the negative jumps $S_{i+1}<S_{i}$ correspond to $\cR_{i}
>0$. Recall also that $\xi$ is supported on $\{1, 0, -1, -2,\dots\}$.  The
hitting time of $\Z^-$ for such walks has been analyzed by \cite{VW09} and
(combined with \cite[Theorem 1 (2.4)]{Do82}) we get
 
\begin{lemma}[Hitting time of $ \Z^-$]\label{lemma:T-}
  If $\tau = \inf\{ i \geq 0 : S_{i} \leq 0\}$ is the hitting time of
  $\Z^-$ by $(S_n : n \geq 0)$, then we have
  \[
  \P(\tau = n) \sim c \cdot n^{-4/3} \qquad \mbox{ as }n \to \infty
  \]
  for some $c>0$ (with an explicit but not useful formula).
\end{lemma}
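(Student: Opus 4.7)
The plan is to reduce the statement to a known local limit theorem for the first passage time of a random walk in the domain of attraction of a stable law. First, I would verify that the step distribution $\xi$ meets the standard hypotheses of such results. At $p = p_c = 1/2$, the variable $\xi = \epsilon(\cE - 1) - \cR$ is integer-valued, supported in $\{1, 0, -1, -2, \dots\}$ (so the walk is upward skip-free), has mean zero, and has heavy left tail
\[
\P(\xi = -k) = q^{\two}_k \sim \frac{1}{4\sqrt{\pi}}\, k^{-5/2}, \qquad k \to \infty.
\]
Hence $\xi$ lies in the domain of attraction of the spectrally negative $\tfrac{3}{2}$-stable law $\cS$, and the invariance principle already recorded in the text identifies the scaling limit, with positivity parameter $\rho = \P(\cS_1 \ge 0) = 2/3$.

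Second, the main input I would quote is the following classical fact: for an oscillating random walk whose step distribution lies in the domain of attraction of a stable law with positivity parameter $\rho \in (0,1)$, the first entrance time $\tau = \inf\{n \ge 1 : S_n \le 0\}$ starting from a fixed positive integer satisfies
\[
\P(\tau = n) \sim c \cdot n^{-(2-\rho)}
\]
for some explicit $c > 0$. With $\rho = 2/3$ this yields exactly the exponent $-4/3$. The integrated version $\P(\tau > n) \sim c' n^{-(1-\rho)}$ is the Rogozin--Sparre-Andersen tail formula for descending ladder epochs coming from Wiener--Hopf factorization. The pointwise refinement from $\P(\tau > n)$ to $\P(\tau = n)$ is precisely the content of the Vatutin--Wachtel local limit theorem \cite{VW09}, combined with Doney's asymptotic \cite{Do82} for the descending ladder-epoch distribution of random walks in the stable domain.

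Third, some care is needed since the walk starts at $S_0 = 1$ rather than at a large level, but this is harmless. The upward skip-freeness of $\xi$ (i.e.\ $\xi \le 1$) means that the walk climbs by at most one unit per step, so the event $\{\tau = n\}$ splits cleanly according to the value of $\xi_1$: either $\xi_1 \le 0$, in which case $\tau = 1$, or $\xi_1 = 1$, in which case the walk restarts from level $2$ and one applies the asymptotic for first passage from level $2$. An analogous shift-by-one argument shows that the tail from level $1$ is comparable to that from any fixed level, absorbing the starting value into the constant $c$.

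The main obstacle is simply reconciling conventions across the references: normalization of the stable index, strict versus weak ladder epochs, and whether the target event is $\{S_n < 0\}$ or $\{S_n \le 0\}$. The integer-valued, upward skip-free structure of the walk makes these distinctions essentially vacuous (strict and weak descending ladder heights differ by a geometric factor coming from steps $\xi = 0$, and this does not affect the tail exponent). Once the statements are aligned, the result is essentially a direct application of the cited local limit theorem, and one could in principle extract the explicit constant $c$ from the Vatutin--Wachtel formula together with the explicit left-tail constant $\tfrac{1}{4\sqrt{\pi}}$ of $\xi$.
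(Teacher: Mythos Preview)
Your proposal is correct and matches the paper's approach exactly: the paper does not give a separate proof but simply states the lemma as a consequence of Vatutin--Wachtel \cite{VW09} combined with Doney \cite[Theorem 1 (2.4)]{Do82}, precisely the two inputs you invoke. Your write-up is a faithful elaboration of that citation, verifying the stable-domain hypothesis, identifying $\rho=2/3$, and reading off the exponent $2-\rho=4/3$.
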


We shall also need an estimate on the fluctuations of sums of i.i.d.\
variables of this type.  This is a standard type of result, but we were not
able to locate a precise reference, so we include a quick proof.

\begin{lemma}[Exponential tail on the right]\label{lem:expotail}
  There exists $c>0$ such that for every $\lambda>0$ and for every $n \geq
  0$ we have for $S_n$ as above
  \begin{align*}
    \P(S_n > \lambda n^{2/3}) & \leq  \exp(-c \lambda).
  \end{align*}
\end{lemma}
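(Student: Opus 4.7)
The plan is to apply a Chernoff / Markov bound on the exponential moment and optimise in $s$. Two features of the step $\xi$ drive the argument: $\xi \leq 1$ almost surely, so $\phi(s) := \E[e^{s\xi}]$ is finite for every $s \geq 0$; and $\xi$ is centred with heavy left tail $\P(\xi = -k) \sim c\,k^{-5/2}$, so the log moment generating function $\psi(s) := \log \phi(s)$ should grow like $s^{3/2}$ near the origin. This $3/2$ exponent mirrors precisely the $n^{2/3}$ spatial scaling of the walk and is what makes the Chernoff argument produce the claimed linear rate in $\lambda$.

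The core step is to show that $\psi(s) \leq C s^{3/2}$ for all $s \in [0, s_0]$ with $C, s_0 > 0$ absolute constants. Since $\E[\xi] = 0$, the inequality $\log(1+x) \leq x$ gives $\psi(s) \leq \E[e^{s\xi} - 1 - s\xi]$. The only positive value of $\xi$ is $1$, contributing $(e^s - 1 - s)\P(\xi = 1) = O(s^2)$, which is negligible. For the negative jumps I split the series at the threshold $k \sim 1/s$: on $\{k \leq 1/s\}$ use the quadratic bound $e^{-sk} - 1 + sk \leq (sk)^2/2$, which produces $\sum_{k \leq 1/s} s^2 k^2 \cdot k^{-5/2} = O(s^{3/2})$; on $\{k > 1/s\}$ use the cruder $e^{-sk} - 1 + sk \leq sk$, which produces $\sum_{k > 1/s} s k \cdot k^{-5/2} = O(s^{3/2})$. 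Adding the two pieces completes the estimate.

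To conclude, Markov's inequality (applied to $S_n - 1 = \xi_1 + \dots + \xi_n$, the shift by $1$ being absorbed into an absolute constant factor $e^s$) yields $\P(S_n > \lambda n^{2/3}) \leq e^{s - s\lambda n^{2/3}} \phi(s)^n = \exp\bigl(s - s\lambda n^{2/3} + n\psi(s)\bigr)$. Choosing $s = c_1 n^{-2/3}$ with $c_1 \leq s_0$ small, we have $n \psi(s) \leq C c_1^{3/2}$ and $s\lambda n^{2/3} = c_1 \lambda$, so that $\P(S_n > \lambda n^{2/3}) \leq K \, e^{-c_1 \lambda}$ for an absolute constant $K$. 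The fixed multiplicative constant is then absorbed by slightly decreasing the exponent (and using the trivial bound $\P \leq 1$ for $\lambda$ below a constant threshold), giving the claimed $\exp(-c\lambda)$. The only delicate piece of the argument is the $\psi(s) = O(s^{3/2})$ estimate above, really a Karamata-style calculation made elementary by the explicit $5/2$ tail exponent of $\xi$; no additional input is needed beyond what has already been established in the paper.
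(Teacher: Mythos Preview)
Your proof is correct and follows essentially the same route as the paper: a Chernoff bound with $s \asymp n^{-2/3}$, using that $\E[e^{s\xi}]-1 = O(s^{3/2})$ as $s\to 0$. The paper simply asserts this last estimate from the tail asymptotic of $\xi$, whereas you spell out the Karamata-style split at $k\sim 1/s$; otherwise the two arguments coincide.
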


In fact, it should be possible to improve the bound to $e^{-c\lambda^3}$,
which would lead to improved powers of $\log n$ in some of the bounds we
get for \cref{thm:criticexpo}.

\begin{proof}
  Using the tail asymptotic of $\xi$ we have for $x>0$
  \[
  \E[e^{x\xi}] - 1  \sim c x^{3/2}, \quad \mbox{as } x \to 0,
  \]
  for some $c>0$.  Applying an exponential Markov inequality after
  multiplying by $x=n^{-2/3}$ we get
  \[
  \P \left( \sum_{i=0}^n \xi^{(i)} > \lambda n^{2/3}\right)
  \leq
  \frac{ \E[\exp(n^{-2/3} \xi)]^n}{e^\lambda} \leq e^{-c+o(1)-\lambda}.
  \qedhere
  \]
\end{proof}
 
\begin{proof}[Proof of \cref{thm:criticexpo}]
  We have sorted $(i)$--$(iii)$ according to the value of the critical
  exponent but we prove first $(ii)$ then $(i)$ and finally $(iii)$. In
  order to lighten notation and spare use the introduction of constants we
  use the symbol $a_{n} \lesssim b_{n}$ if there exists some universal
  constant $C>0$ such that $a_{n} \leq C b_{n}$ for all $n \geq 0$
  
  \paragraph{$(ii)$:  The hull's perimeter.}
  The peeling exploration of the cluster of the origin follows the contour
  of the cluster keeping it on the right.  In particular, the number of
  steps of peeling necessary to explore the cluster of the origin is thus
  the number of triangles adjacent to $\cH$, which is easily related to the
  size of $\partial\cH$ below.  There is a slight problem arising at the
  last step of the exploration.  The exploration reveals enough of the map
  to guarantee that the cluster is finite, but does not enter the region
  surrounded by the last jump, see \cref{fig:perimeter} below.

  \begin{figure}[!h]
    \begin{center}
      \includegraphics[width=14cm]{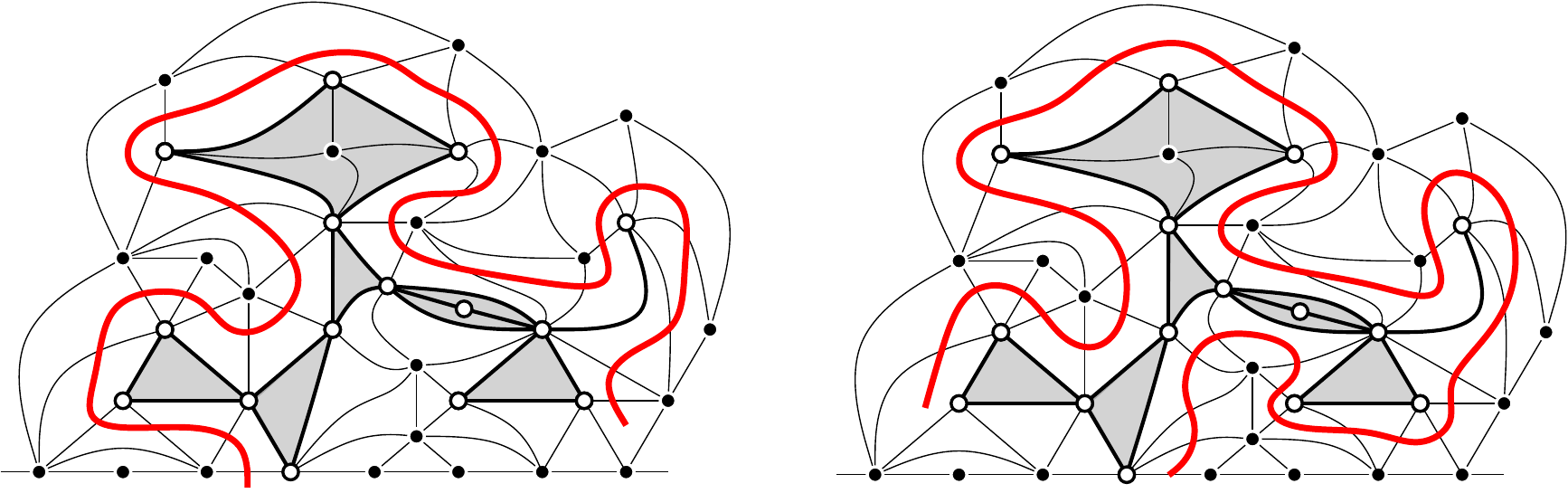}
      \caption{Left: The peeling exploration goes along the boundary of the
        cluster until the first time it touches the axis on the right of
        the root. Right: The interface explored from its other end.}
      \label{fig:perimeter}
    \end{center}
  \end{figure}

  This problem can be circumvented by using a rightmost exploration
  process as depicted in the figure.  This is just a mirror image of the
  process, and since the $\bM$ is symmetric in distribution, this dual
  exploration has the same law as the original one.  Thus if we denote by
  $\tau^\ell$ (resp.\ $\tau^r$) the number of steps of peeling when
  discovering the cluster clockwise (resp.\ counterclockwise) and if
  $|\widetilde{\partial \cH}|$ denotes the number of triangles on the
  boundary of $ \cH$ then we have
  \[
  \tau^\ell \vee \tau^r \leq |\widetilde{\partial \cH}| \leq 
  \tau^\ell + \tau^r.
  \]
  By the description of the peeling process in the triangulation case, both
  $\tau^\ell$ and $\tau^r$ are distributed according as $\tau$ (though they
  are not independent!).  Since each triangle is incident to at most $3$
  vertices of $\partial\cH$, \cref{lemma:T-} now implies that
  $\P(|\partial\cH| > n) \lesssim n^{-1/3}$.

  For the lower bound, observe that at each step of the peeling the
  probability of having $\cR>0$ is some constant ($(1-q_{-1})/2 = 1/6$ as it
  happens).  Thus the number of triangles on the interface incident to any
  vertex of $\partial\cH$ is dominated by a geometric random variable.
  Moreover, these dominating random variables may be made independent.
  (That is, there is a coupling of independent geometric variables and the
  map so that the domination holds a.s.)  By a standard large deviation
  estimate for sums of random variables, for suitable constants,
  \[
  \P(|\partial \cH|>n, \tau^r\vee\tau^\ell < cn) < e^{-cn} \ll n^{-4/3}.
  \]
  With \cref{lemma:T-} this gives the lower bound.

  \paragraph{$(i)$: The hull's volume.}
  The volume of the hull $|\cH|$, can be measured in vertices or faces.  We
  work below with vertices, though the two are directly connected for
  triangulations in terms of the boundary which is smaller by $(ii)$, and
  so the number of faces is of the same order of magnitude.  Faces have a
  slight advantage of in that faces are added to the hull in one way only,
  as the enclosed triangulations encountered on the right of the peeling
  process when we enclose a region and fill it in with a Boltzmann
  triangulation of the proper perimeter, whereas vertices may also be on
  the boundary.  Special care is needed again for the last peeling step
  because the triangulation put inside the last peeling step is not
  entirely contained in the hull.

  Let $Z_i$ be the number of vertices added to the hull in the $i$th step.
  This is $1$ when a new internal vertex is discovered.  When $\cR_i>0$,
  this is the number of internal vertices in the Boltzmann map added,
  except at the last step when only part of that map is in the hull. Thus
  we have
  \[
  \sum_{i=1}^{\tau-1} Z_i \leq |\cH| \leq \sum_{i=1}^{\tau} Z_i.
  \]
  A simple observation using the formulae for $\#\cM^\two_{n,p}$ shows that
  a Boltzmann triangulation with perimeter $p$ typically has a size of order
  $p^2$. More precisely from \cite[Proposition 5.1]{Ang03} and
  \cite[Proposition 6.4]{Ang03} there exists $c>0$ such that for all $p>0$
  and $\lambda > 1$ we have
  \begin{equation}\label{eq:levy}
    \E[Z_{i} | \cR_{i} =p] \sim \frac{2}{3}p^2 \quad \mbox{and} \quad
    \P(Z_{i} > \lambda p^2 | \cR_{i} =p) \leq c \lambda ^{-3/2}
  \end{equation}
  for some $c>0$.  In fact by \cite[Proposition 6.4]{Ang03} the
  distribution of the size of a Boltzmann triangulation of perimeter $p$
  renormalized by $p^2$ converges towards a L\'evy distribution.

  For the lower bound, we exhibit a way for $\cH$ to have size $n$ with
  probability of order $n^{-1/4}$: First the peeling process continues for
  at least $n^{3/4}$ steps that is $ \tau>n^{3/4}$.  This has probability
  of order $n^{-1/4}$ by \cref{lemma:T-}.  On this event, there are
  typically (several) times before $\tau$ when $\cR_i$ is of order
  $(n^{\frac34})^{\frac23} = n^{1/2}$, so with probability bounded from $0$
  there is at least one such jump of size at least $\sqrt{n}$.  Such a jump
  adds to the hull a Boltzmann triangulation of perimeter of order
  $n^{1/2}$ which is typically of size $n$ by \eqref{eq:levy}. Thus on the
  event that $\tau > n^{3/4}$ there is at least a constant probability that
  $|\cH| > n$, and so $\P(|\cH|>n) \geq C n^{-1/4}$ for some $C >0$.

  Informally, the reason this is the typical way of getting a large hull is
  that it is easier to make $\tau$ even larger (exponent $1/3$) than to
  have $\tau$ smaller and have an unusually large Boltzmann map (exponent
  $3/2 > 1/3$).  It is also possible to have $\tau$ small and no abnormally
  large Boltzmann map, but this requires the discrete stable process to
  behave badly, which is even less likely.

  To prove the corresponding upper bound we start by ignoring the last step
  $\tau$, and first split according to $\left\{\tau > n^{3/4}\right\}$:
  \[
  \P\Big(\sum_{i<\tau} Z_i > n\Big)
  \leq \P\Big(\tau>n^{3/4}\Big)
  + \P\Big(\tau\leq n^{3/4}, \sum_{i<\tau} Z_i >n\Big).
  \]
  The first summand is at most a constant times $n^{-1/4}$, so we focus on
  the second and apply Markov's inequality:
  \[
  \P\Big(\tau\leq n^{3/4}, \sum_{i<\tau} Z_i >n\Big)
  = \P\left(\sum_{i=1}^{n^{3/4} \wedge (\tau-1)} Z_i >n \right)
  \leq \frac1n \E \left[\sum_{i=1}^{n^{3/4} \wedge (\tau-1)} Z_i \right].
  \]
  Using \eqref{eq:levy} we have that $\E [Z_i \mid \cR_{i}] \lesssim
  1+\cR_i^2$ (allowing for $\cR_i=0$). So after conditioning
  on $( \cR_{i})$ and taking the $1$ terms outside the sum we end-up with
  \[
  \P\Big(\sum_{i<\tau} Z_i > n\Big)
  \lesssim \frac{1}{n} \left(n^{3/4}
  + \sum_{i=1}^{ \lfloor  n^{3/4}\rfloor} \E \Big[\cR_i^2
  \mathbf{1}_{\tau>i} \Big] \right).
  \]
  To compute the $\E[\cR_i^2]$ we need first to truncate it.  On the event
  $\tau>i$ we have $\cR_i < S_{i-1} \leq i$ (since $S_i$ increases by at most
  $1$ at each step).  By \cref{lem:expotail} we have that $\P(S_i > \lambda
  i^{2/3}) < e^{-c \lambda}$.  Thus with very high probability
  (super-polynomially close to $1$)
  \[
  S_i < (n^{\frac34})^{\frac23} \log^2 n = \sqrt{n} \log^2 n
  \qquad \text{for all } i<n^{3/4}.
  \]
  This allows us to truncate the $\cR_i$s: for some $C>0$ we have
  \[
  \E \left[\cR_i^2 \mathbf{1}_{\tau>i} \right]  \leq
  C + \E \left[\cR_i^2 \mathbf{1}_{\tau>i} \mathbf{1}_{\cR_i < \sqrt{n}
      \log^2 n} \right].
  \]
  
  The next step is to separate the restriction to $\tau > i$.  For this
  we observe that the events $\{\tau > i\}$ and $\{\cR_i>k\}$ are
  negatively correlated since a larger negative jump can only help the
  process hitting $\Z^-$.  Thus conditioning on $\{\tau>i\}$ stochastically
  decreases $\cR_i$ and we have
  \[
  \E \Big[
  \cR_i^2 \mathbf{1}_{\tau>i} \mathbf{1}_{\cR_i < \sqrt{n} \log^2 n} \Big]
  \leq \P\Big(\tau>i\Big)
       \E \Big[\cR_i^2 \mathbf{1}_{\cR_i < \sqrt{n} \log^2 n} \Big]
  \lesssim i^{-1/3} n^{1/4} \log n,
  \]
  where we have used the tail distribution of $\tau$ and the easy estimate
  $\E \left[ \cR^2 \mathbf{1}_{\cR < M} \right] \lesssim \sqrt{M}$.
  Plugging this in we find
  \[
  \P\Big(\sum_{i<\tau} Z_i > n\Big)
  \lesssim \frac{1}{n} \left(n^{3/4}
  + \sum_{i=1}^{ \lfloor  n^{3/4}\rfloor} i^{-1/3} n^{1/4} \log n \right)
  = n^{-1/4+o(1)}.
  \]

  This almost completes the proof.  It remains to show that the
  contribution from the last step when $(S_i)$ hits $\Z^-$ is also unlikely
  to be large. Here we do not have $ \mathcal{R}_{\tau} \leq S_{\tau-1}$
  because of the undershoot below $0$. Let us call $L$ this last jump and
  let $Z$ be the size of the Boltzmann map added during this last jump.
  Again, we may restrict to the event $\{\tau\leq n^{3/4}\}$ and shall
  split according to the value of $\tau$.  By \eqref{eq:levy} we have
  \[
  \P(Z >n \mid L) \leq c\left(\frac{n}{L^2}\right)^{-3/2}.
  \]
  On the other hand we have for $k\geq 1$
  \[
  \P(L = k \mid \tau=i+1, S_i)
  = \mathbf{1}_{k \geq S_{i}} \frac{\P(\xi = -k)}{\P(\xi \leq -S_i)}
  \lesssim S_i^{3/2} k^{-5/2}.
  \]
  Using \cref{lem:expotail} and the bound $S_{i} \leq i+1$ we have $\E[S_i^{3/2} \mid \tau= i+1 ] \lesssim (i+1)$ so  $\P(L = k \mid \tau=i) \lesssim i k^{-5/2}$.  Finally
  \begin{align*}
    \P(Z \geq n)
    &\lesssim \P(\tau \geq n^{3/4}) + \sum_{i=1}^{\lfloor n^{3/4} \rfloor} 
    \P(\tau=i) \left(\sum_{k=1}^{\lfloor \sqrt{n} \rfloor} i k^{-5/2}
      \left(\frac{n}{k^2}\right)^{-3/2} + in^{-3/4} \right) \\
    &\lesssim n^{-1/4}.
  \end{align*}

  \paragraph{$(iii)$: The hull's perimeter, excluding fjords.}
  In order to study $\partial\cH^1$ we consider the evolution of the
  boundary of $\cH^1$ as the peeling process progresses.  A new difficulty
  is that only after the process has terminated we can tell with certainty
  whether any particular vertex is in $\partial\cH^1$ or not.
  %
  As we follow the peeling process, newly revealed black vertices are added
  {\em tentatively} to $\partial\cH^1$, but are removed from it if they are
  in a part of the boundary that is swallowed by a connection to the left
  of the peeling edge.

  In order to control the tail of $\partial\cH^1$, let us introduce an
  auxiliary process $Y_n$, which follows the evolution of the black
  boundary left of $\cC$.  This length is of course infinite always, so
  instead we follow only the change of this boundary, in the form of
  vertices added and removed from the boundary.  Formally let $\cL_i$ be
  the number of edges swallowed {\em on the left} of the $i$th peeling
  point. We have that $(\cE_i,\cL_i)$ have the same law as $(\cE_i,\cR_i)$,
  though $\cL_i$ is not independent of $\cR_i$.  Recall also that
  $\epsilon_i$ is the indicator of the event that this new vertex is white.
  We form the process $(Y)$ by putting $Y_0=0$ and
  \[
  Y_i = Y_{i-1} + \mathbf{1}_{\cE_i=2} (1-\epsilon_i) - \cL_i.
  \]
  Thus $Y$ is just a random walk with i.i.d.\ increments distributed as
  $\xi$.  Note that the two walks $(S)$ and $(Y)$ are not independent,
  since they use the same $\cE$ and $\epsilon$ sequences, and since when
  $\cE=1$, $\cL,\cR$ are not independent.

  With this notation we can determine $|\partial\cH^1|$ from $(Y)$ and
  $\tau$.  Whenever $Y$ reaches its infimum, all the black vertices
  discovered so far are swallowed.  The vertices tentatively in
  $\partial\cH^1$ correspond exactly to increments of $Y$ above its
  infimum.  Thus if we denote $\underline{Y}_i = \min_{j<i} Y_j$, then
  \[
  |\partial\cH^1| = Y_{\tau} - \underline{Y}_{\tau}.
  \]

  While the processes $(Y)$ and $(S)$ (and in particular $\tau$) are not
  precisely independent, they are quite close to independent in the
  following sense.  Recall that we are considering only triangulations for now,
  so either $\cR_i=0$ or $\cL_i=0$ (possibly both).  Since newly discovered
  vertices are also either black or white but not both, each step of the
  peeling process changes at most one of $Y$ and $S$.  It follows that if
  we switch to continuous time with peeling steps controlled by a Poisson
  clock then the two processes become completely independent.  For other
  tyes of maps and percolation models a slightly weaker form of
  independence holds (see \cref{sec:modifications}).

  To get a lower bound on $\P(|\partial\cH^1|>n)$, consider the event
  $\{\tau > n^{3/2}\}$. This event has probability equal to a constant
  times $n^{-1/2}$ by \cref{lemma:T-}.  Conditioned on this event, by the
  above remarks, the process $Y_{\tau} - \underline{Y}_{\tau}$ is roughly
  distributed as $n$ times $\cS_1-\underline{\cS}_1$, where $\cS$ is the
  Airy-stable process introduced in the beginning of the section, and in
  particular the conditional probability of $|\partial\cH^1|>n$ is bounded
  away from $0$.

  Let us prove the corresponding upper bound. Since $Y$ is a sum i.i.d.\
  variables with $0$ mean, bounded above by $1$ and in the domain of
  attraction of a $\frac32$-stable variable, we have from
    \cref{lem:expotail} for any $j<i$ that
  \[
  \P (Y_i-Y_j > n )  \lesssim e^{ -c n / (j-i)^{2/3}} \lesssim e^{-c n/i^{2/3}}
  \]
  and so
  \[
  \P\Big(Y_{\tau} - \underline{Y}_{\tau} > n , \tau=i\Big)
  \lesssim i e^{-c n/i^{2/3}}.
  \]
  For $i<n^{3/2} \log^{-3} n$ the exponential term easily dominates and we
  have that $\P\Big(|\partial\cH^1| > n, \tau \leq n^{3/2} \log^{-3} n
  \Big)$ decays super-polynomially (faster than $n^{3} e^{-c\log^2 n}$).  On the
  other hand, $\P(\tau \geq n^{3/2} \log^{-3} n) \geq C n^{-1/2} \log
  n$, and the proof is finally complete.
\end{proof}

\subsection{Universality of critical exponents} \label{sec:modifications}

In this section we comment on the modifications to make to the previous
section if we consider face or bond percolation on general $\bM^*$.  We
endeavor to address all the different difficulties that arise, and some
possible ways to overcome them.  However, a full proof would involve some
nasty details and so we stay at a rather high and (very) imprecise level,
and formally state \cref{thm:criticexpo} only for triangulations.  May the
reader forgive us.

First, the description of the active boundary $S_n$ in the percolation
exploration process is no longer exactly a random walk killed uppon hitting
$\Z^-$.  However, it is very closely related to such a process.  In
particular, as long as $S_n$ is large the increments are i.i.d.  It is easy
to see that the increments of this walk have mean $0$ exactly at the
critical point, that the increments are bounded from above and have
heavy-tail of exponent $5/2$ on the left (even in the case of
quadrangulations where there may be two segments swallowed on the right, as
in \cref{fig:peelhalfquad3}).  In particular these increments are always in
the domain of attraction of the $\frac{3}{2}$-stable process with no
positive jumps.  Obviously, the value of $\kappa>0$ is changed.

Concerning \cref{thm:exacttheta}, a similar argument holds for some the
other percolation models we discussed, but yields a slightly weaker result.
Indeed, for the other models the associated process $S_n$ has increments
that are bounded but not by $1$.  Thus the ballot theorem does not give the
precise probability of eternal positivity.  Still, the probability that a
random walk on $\Z$ with steps bounded by $k$ with expectation $\mu>0$
remains positive at all times is between $\mu/k$ and $\mu$.  Thus the
identity of the theorem is replaced by lower and upper bounds differing by
a constant.  In the case of bond percolation, the stopping time of the
exploration may not coincide with the first time the active boundary is
swallowed but is lower bounded by the preceding and upper bounded by a
geometric number of them.  All these modifications clearly do not affect the
near-critical exponent so that we still have $\theta(p-p_{c}) =
(p-p_{c})^{1+o(1)}$. 

Since the tail of $\tau$ the stopping time $\tau$ is also going to be the
same, the lower bounds of \cref{thm:criticexpo} are almost unchanged: On
the event $\tau>n$ there is a high probability that $|\cH| > n^{4/3}$, that
$|\partial\cH| > n$ and $|\partial\cH^1| > n^{2/3}$.

For the upper bounds, again most of the estimates still hold.  Some
additional computations arise since it is possible to have multiple
Boltzmann maps revealed at a single step, but mostly the arguments
hold. Special care is needed for the bound on $|\partial\cH^1|$.  Now  the
processes $S$ and $Y$ are even further from being independent, since it is
possible for both of them to make jumps at the same time.  However, it is
possible to show that the large jumps in these processes occur at distinct
times, and so the joint distribution $(Y_n,S_n)_n$ is that of independent
processes.  This allows one to estimate the fluctuations of $Y$ above its
infimum at the killing time $\tau$ of $S$.

  \bibliographystyle{siam}

\end{document}